\theoremstyle{definition}%%斜体防止
\newtheorem{theorem}{Theorem}[section]
\newtheorem*{theorem*}{Theorem}
\newtheorem{proposition}[theorem]{Proposition}
\newtheorem{lemma}[theorem]{Lemma}
\newtheorem{example}[theorem]{Example}
\newtheorem{corollary}[theorem]{Corollary}
\newtheorem{remark}[theorem]{Remark}
\theoremstyle{definition}%%斜体防止
\newtheorem*{acknowledgement*}{Acknowledgements}
\newtheorem*{problem*}{Problem}
\theoremstyle{definition}%%斜体防止
\newtheorem{definition}[theorem]{Definition}
\newcommand*{\rom}[1]{\expandafter\@slowromancap\romannumeral #1@}
\newcommand{\id}{\text{id}}
\newcommand{\M}{\mathcal{M}}
\newcommand{\Tsim}{\stackrel T\sim }
\newcommand{\R}{\mathbb{R}}
\newcommand{\es}{\tilde{\emptyset}}
\newcommand{\MMK}{\widetilde{\mathcal{M}} (K)}
\newcommand\stackrqarrow[2]{%
    \mathrel{\stackunder[2pt]{\stackon[4pt]{$\rightsquigarrow$}{$\scriptscriptstyle#1$}}{%
            $\scriptscriptstyle#2$}}}
\title{Connecting Discrete Morse Functions via Birth-Death Transitions}
\author{Chong Zheng}
\address{Faculty of  Science and Engineering, Waseda University, 
Ohkubo, Shinkuju-ku, Tokyo, 169-8555 Japan}
\email{c{\_}zheng@aoni.waseda.jp}
\keywords{Discrete Morse theory, birth-death transitions, simplicial complexes, complex of discrete Morse functions.}
\begin{document}

\begin{abstract}
 We study transformations 
 between discrete Morse functions on a finite simplicial complex via birth-death transitions—elementary chain maps between discrete Morse 
 complexes that  either create or cancel pairs of critical simplices.
We prove that any two discrete Morse
functions
$f_1,f_2:K \to \R$ on a finite simplicial complex 
$K$ 
are linked by a finite sequence of such transitions.
As  applications, we present alternative proofs 
of several of Forman’s fundamental results in discrete Morse theory
and study the topology  of  the space of discrete Morse functions.
\end{abstract}
\maketitle

\section{Introduction}
In classical Morse theory,
a well-known result states that  
two Morse functions $\phi_1, \phi_2:M \to \R$
on a compact manifold $M$
can be continuously deformed into one another via a one-parameter 
family \cite{cerf,cerf2}.
 This fundamental result 
 plays a crucial role in understanding 
 the structure of differentiable manifolds
  by providing a mechanism to relate different Morse functions on the same manifold. 

In discrete Morse theory, 
a  parallel result is naturally expected.
Discrete Morse theory, 
introduced by Forman \cite{Forman_guide, Morse Theory for Cell Complexes}, 
adapts  the philosophy of 
smooth cases  to simplicial complexes and cell complexes, 
replacing smooth functions with combinatorial analogs.
For a given simplicial complex 
$K$, 
a discrete Morse function assigns values to its simplices in a manner compatible
 with topological gradient flow.

Let $K$ be a simplicial complex
and $f_1, f_2:K \to \R$ be two discrete Morse functions. 
In \cite{birth and death} and our previous works \cite{no.1,no.2},  
we introduced   the concept of  \textit{connectedness} between both
critical simplices and discrete Morse functions 
enabling 
an algebraic characterization of the relationship between their associated \textit{gradient vector fields}. 
In particular,
the \textit{connectedness homomorphisms}
between discrete Morse complexes
$C_{\ast}^{f_1}, C_{\ast}^{f_2}$,
corresponding  to $f_1$ and $f_2$ respectively,
provide a powerful tool for analyzing the relationships between 
$f_1$ and $f_2$.

Another central theme in discrete Morse theory 
 is the study of \textit{the complex of discrete Morse functions},
denoted $\M(K)$, as
introduced by Chari and Joswig \cite{chari}.
The authors constructed  
$\M(K)$ as a simplicial complex to encode 
the space of 
discrete Morse functions on $K$,
establishing a bijection between $\M(K)$
and the set of gradient vector fields on $K$.
Following this work,
 investigating the  homotopy type  of $\M(K)$ 
has been a major task in discrete Morse theory \cite{determined,homotopy_type,homotopy_type 2,higher connectivity of the morse complex}.

This paper 
continues the study and clarifies some ambiguities  in \cite{no.2} by
 following the idea of  \textit{birth-death transitions}
to connect discrete Morse functions on $\M(K)$.
Precisely, 
we show in Theorem \ref{main_theorem} that 
any two  discrete Morse functions 
$f_1$ and $f_2$  can be transformed into 
one another
with a finite sequence of 
birth-death transitions.
Moreover, 
we apply this theorem to give an alternative, self-contained proof of some
Forman's fundamental results over $\mathbb{Q}$ in 
Theorem \ref{thm:homology-isomorphism}
and 
Theorem \ref{thm:morse_equality}.

This paper is organized as follows.
In Section 2, 
we provide a
 concise overview of the 
  preliminaries, 
 including 
 the fundamentals of
  discrete Morse theory,	
  as well as the concepts
   of connectedness homomorphisms.
In Section 3,
we introduce the birth-death transitions introduced in \cite{no.2}
between two discrete Morse complexes.
In Section 4,
we prove Theorem \ref{main_theorem}.
Section 5 applies this framework to re-establish key theorems of Forman 
through the compositions of birth-death transitions.
Also,
we provide some remarks about the topology of the space of 
discrete Morse functions on  $K$.
\section{Preliminaries}
This section provides the foundational concepts necessary for the remainder of the paper.

\subsection{Discrete Morse Functions and Gradient Vector Fields}
For a comprehensive
understanding 
of  \textit{discrete Morse theory } and \textit{discrete Morse homology},
readers are referred  
 to  \cite{Forman_guide, Morse Theory for Cell Complexes,Discrete Morse Theory Kozlov}.

Let $K$ be a simplicial complex,
$K_p$ be the set of $p$-dimensional simplices.
We write $\sigma^{(p)}$ to represent a 
$p$-dimensional simplex,
omitting the superscript
$p$
when the dimension is clear from context.
We write $\tau \prec \sigma$  and  $\tau  \succ  \sigma$ if 
\textit{$\tau $ is a face of $\sigma$}, and \textit{$\sigma$ is a face of $\tau $}.

A \textit{discrete Morse function}
$f:K \to \mathbb{R}$
satisfies for every simplex
$\sigma \in K_p$,
\begin{enumerate}
\item $\displaystyle \# \{\tau^{(p+1)} \succ \sigma \, | \, f(\tau) \leq f(\sigma)\} = 0 \text{ or } 1;$
\item 
$\displaystyle  \# \{v^{(p-1)} \prec \sigma \, | \, f(\sigma) \leq f(v)\} =0 \text{ or } 1,$
\end{enumerate}
where $\#$ denotes the cardinality of a set.

A simplex
$\sigma^{(p)}$ is called a
\textit{critical}  simplex with respect to $f$
if:
\begin{enumerate}
\item 
$\displaystyle\# \{\tau^{(p+1)} \succ \sigma \, | \, f(\tau) \leq f(\sigma)\} =0;$
\item 
$\displaystyle\# \{v^{(p-1)} \prec \sigma \, | \, f(\sigma) \leq f(v)\} =0.$
\end{enumerate}
If $\sigma$ is  critical,
then we call $f(\sigma)$ a \textit{critical value} of $f$.
Conversely, if a simplex $\sigma$ is not critical, 
then
we say that $\sigma$ is a \textit{non-critical} simplex,
and $f(\sigma)$ is a \textit{non-critical value}.
The set of $f$-critical $q$-simplices is denoted
$Cr_q(f):= \{ \sigma \in K_q \, | \, \sigma^{(q)} \text{ is } f \text{-critical}  
  \}$  .

The  \textit{gradient vector field} $V$ corresponding to $f$ is 
the collection of pairs 
$(\alpha^{(p)}, \beta^{(p+1)})$
satisfying 
$\alpha \prec \beta$ and
$f(\alpha) \geq f(\beta)$.
We call such a pair $(\alpha, \beta)$
a \textit{gradient pair}.

A  \textit{gradient $V$-path} is a sequence 
of simplices
$$\alpha_0^{(p)}, \beta_0^{(p+1)}, \alpha_1^{(p)}, \beta_1^{(p+1)},\cdots, \beta_r^{(p+1)}, \alpha_{r+1}^{(p)} $$
such that for
each $i= 0, \cdots,r $:
\begin{itemize}
\item $(\alpha^{(p)}, \beta^{(p+1)})\in V$,
\item $\beta_i^{(p+1)} \succ \alpha_{i+1}^{(p)}$ and $\alpha_{i+1}^{(p)} \neq \alpha_i^{(p)}.$
\end{itemize}

We denote a \textit{gradient  $V$-path} $\gamma$ 
from 
$\alpha_0^{(p)} $ to 
$\alpha_{r+1}^{(p)}$,
consisting of $p$- and $(p+1)$-dimensional 
simplices
by 
$$\gamma: \alpha_0^{(p)}  \stackrqarrow{V(p,p+1)}{}  \alpha_{r+1}^{(p)}. $$

Alternatively, a $V$-path may begin at a 
$(p+1)$-dimensional simplex $\beta_0$ and end at $\beta_r$,
denoted:
$$\gamma: \beta_0^{(p+1)} \stackrqarrow{V(p,p+1)}{}  \beta_r^{(p+1)}. $$

\subsection{Discrete Morse Complexes}

As a discrete analog of the 
Morse-Smale complex in smooth Morse theory (see \cite{Milnor, Guest}), 
the discrete Morse complex is defined  by Forman \cite{Forman_guide}.
Its chain groups 
are generated by critical simplices, with 
boundaries determined by counting 
gradient paths weighted by incidence numbers.

Assume each simplex of $K$ is oriented.
For $\alpha^{(q-1)}\prec \sigma^{(q)}$,
we use $[ \sigma : \alpha]=\pm 1$ to represent \textit{the incidence number} of 
 $\sigma$ and $ \alpha$,
where the sign is chosen according to the orientation.

Let $f:K\to \mathbb{R}$ be a discrete Morse function.
The \textit{$q$-th chain group of discrete Morse complex} 
$C_q^f= C_q^f(K)$
is the free Abelian group generated by
 $\sigma_i \in Cr_q(f)$,  
The
\textit{$q$-th boundary homomorphism}
$\partial_{q}^{f}: C_q^f \to C_{q-1}^f$  
is defined as the linear extension 
of
$$\partial_{q}^{f}(\sigma_i):= \sum_{ \alpha \in Cr_{q-1}(f)} n^{f}(\sigma_i, \alpha) \alpha.$$
Here, $n^{f}(\sigma_i, \alpha)$ is 
given by
$$n^{f}(\sigma_i, \alpha):= \sum_{\gamma } m(\gamma),$$
where $\gamma$ 
runs over all gradient paths $\sigma_i \stackrqarrow{V(q-1,q)}{}\alpha$,
and
$m(\gamma)$ is given by the product of 
each incidence number.
We call $n^{f}(\sigma_i, \alpha)$
the \textit{connectedness coefficient between  
$\sigma_i$ and  $\alpha$}.
In particular, $\partial_{0}^{f}$ is defined as
 the zero homomorphism.

It is known that 
$\partial^f \circ \partial^f=0$ \cite[Theorem 7.1]{Forman_guide}.  
Hence, $\{C_{\ast}^f, \partial_{\ast}^f \}$ 
forms the \textit{discrete Morse chain complex},
and the \textit{discrete Morse homology}
is defined as 
$$H_{\ast}^f:= \frac{\text{ker} \partial^f_{\ast} }{\text{im} \partial^f_{\ast}}.$$
Moreover, discrete Morse homology is 
isomorphic to the ordinary homology \cite[Theorem 7.1]{Forman_guide}.

\subsection{Connectedness and  Connectedness Homomorphisms}

Let $f_1, f_2: K \to \mathbb{R} $ be distinct discrete
Morse functions on $K$,
with respective gradient vector fields $V_1$ and $V_2$.
The notion of  \textit{connectedness} between critical simplices $\tilde{\sigma}_1\in Cr_{q}(f_1)$ and 
$\tilde{\sigma}_2\in Cr_q(f_2)$ 
is first introduced by King, Mramor Kosta, and Knudson \cite{birth and death}.
They applied the concept to investigate the structure of the gradient vector fields 
and provided some interesting applications. 

Precisely,
for critical simplices $\tilde{\sigma}_1\in Cr_{q}(f_1)$ and 
$\tilde{\sigma}_2\in Cr_q(f_2)$, 
we  say that 
$\tilde{\sigma}_1$ 
is \textit{connected} to $\tilde{\sigma}_2$ if
\begin{itemize}
\item for $q\neq 0$, 
 there is an $f_1$-gradient path  
$\tilde{\sigma}_1^{(q)} \stackrqarrow{V_1(q-1,q)}{} \tilde{\sigma}_2^{(q)}; $

\item for $q=0$, 
there is an $f_2$-gradient path  
$\tilde{\sigma}_1^{(0)} \stackrqarrow{V_2(0,1)}{} \tilde{\sigma}_2^{(0)}.$
\end{itemize}

Note that the corresponding connectedness coefficient $n^{f_1}(\tilde{\sigma}_1,   \tilde{\sigma}_2)$ captures the number (weighted by incidence) of such 
gradient paths.

Furthermore,
we say that
$\tilde{\sigma}_1$ 
is \textit{strongly connected} to $\tilde{\sigma}_2$
if $\tilde{\sigma}_1$ 
is connected to $\tilde{\sigma}_2$ and 
$\tilde{\sigma}_2$ 
is connected to $\tilde{\sigma}_1$,
denoted    
$\tilde{\sigma}_1 \leftrightarrow \tilde{\sigma}_2.$

For each $q\geq 1$,
the \textit{$q$-dimensional connectedness homomorphism}
from $C_{q}^{f_1}$ to
$C_{q}^{f_2}$
$$h_q: C_{q}^{f_1} \to C_{q}^{f_2}$$
is defined as the linear extension of 
$$h_q(\tilde{\sigma}_1):= \sum_{\sigma_j\in Cr_q(f_2)} n^{f_1}(\tilde{\sigma}_1, \sigma_j) \sigma_j,$$
where $\tilde{\sigma}_1 \in Cr_q(f_1)$.

For $q=0$,
inheriting the $0$-dimensional connectedness between simplices $\tilde{\sigma}_1^{(0)}$ and 
$\tilde{\sigma}_2^{(0)}$, 
we let 
$$h_0: C_{0}^{f_1} \to C_{0}^{f_2}$$
be the linear extension of 
$$h_0(\tilde{\sigma}_1):= \sum_{\sigma_j\in Cr_0(f_2)} n^{f_2}(\tilde{\sigma}_1, \sigma_j) \sigma_j,$$
where $\tilde{\sigma}_1 \in Cr_0(f_1)$.

On the other hand, 	
we can also define the reverse \textit{connectedness homomorphism}
\textit{from} \textit{$C_q^{f_2}$ to
$C_q^{f_1}$}
with a parallel way 
as 
$$g_q: C_{q}^{f_2} \to C_{q}^{f_1}.$$

Note, 
however, 
that these maps are 
generally not chain maps, 
since connectedness is not transitive in general.

\section{Birth-death transitions between discrete Morse complexes}
In this section,
we review and provide more details on
\textit{birth-death transitions} introduced in \cite{no.2}.
The notion of birth-death transitions  is  a discrete analogue inspired
 by classical Morse theory’s creation and cancellation of critical points. 
These transitions act as chain maps between discrete Morse complexes
$C_{\ast}^{f_1}$
and 
$C_{\ast}^{f_2}$
adding or removing a pair of critical simplices in consecutive dimensions,
while preserving the underlying homology \cite{no.2}.
Building upon the 
connectedness homomorphisms defined previously, we now formalize the birth and death transitions and investigate their structural properties.

Fix a dimension $i$.
Suppose two discrete Morse functions 
$f_1, f_2: K\to \R$ on a simplicial complex $K$ such that:
$$\#Cr_i(f_1)= \# Cr_i(f_2) -1;\quad \#Cr_{i-1}(f_1)=\# Cr_{i-1}(f_2) -1,$$
and for all  $q \neq i, i-1$, 
$$Cr_q(f_1)= Cr_q(f_2).$$
We denote the two additional $f_2$-critical simplices
simplices as 
$\tilde{\sigma}\in Cr_i(f_2)$
and
$\tilde{\alpha}\in Cr_{i-1}(f_2)$,
with the coefficient
$n^{f_2}(\tilde{\sigma},\tilde{\alpha})=k\in \mathbb{Z}.$

 \begin{definition}\label{birth-death transition}
Under the conditions specified above,  
we define
\begin{itemize}
    \item The connectedness homomorphism $h : C_q^{f_1} \to C_q^{f_2}$ as a \textit{birth transition};
    \item The connectedness homomorphism $g : C_q^{f_2} \to C_q^{f_1}$ as a \textit{death transition}.
\end{itemize}
 
\end{definition}

In  \cite{no.2},
 the following properties were assumed 
for simplicity.
Here we 
state them explicitly as lemmas to clarify their behaviours.
Note that these lemmas simply follow from 
the assumptions in the definition of 
birth-death transitions.

\begin{lemma}
Let 
$h:C_q^{f_1} \to C_q^{f_2}$ and $g:C_q^{f_2} \to C_q^{f_1}$ be a birth transition and 
a death transition, 
respectively.

Then, 
\begin{itemize}

\item For any  $\delta_1\in Cr_q(f_1)$, 
$$h_q(\delta_1)= 
\begin{cases}
\delta_1 & \text{ if }q\neq i; \\
\delta_1+n^{f_1}(\delta_1, \tilde{\sigma}) \tilde{\sigma} &\text{ if } q=i.

\end{cases}$$

\item For any  $\delta_2\in Cr_q(f_2)$, 
$$g_q(\delta_2)= 
\begin{cases}
\delta_2 & \text{ if }\delta_2 \neq \tilde{\sigma}, \tilde{\alpha};  	\\
0 &\text{ if } \delta_2=\tilde{\sigma};		\\
\displaystyle  \sum_{\alpha\in Cr_{i-1}(f_1)} n^{f_2} (\tilde{\alpha}, \alpha) \alpha & \text{ if }  \delta_2=\tilde{\alpha}.

\end{cases}$$

\end{itemize}

\end{lemma}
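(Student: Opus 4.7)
The plan is to prove the lemma by directly unpacking the definitions of the connectedness homomorphisms $h$ and $g$, using the structural constraint that $f_1$ and $f_2$ differ minimally: $V_1$ consists of the $V_2$-pairs together with the single extra pair $(\tilde{\alpha}, \tilde{\sigma})$ at dimensions $(i-1, i)$, and $Cr_q(f_1) = Cr_q(f_2)$ for all $q \neq i, i-1$.

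First I would expand $h_q(\delta_1) = \sum_{\sigma_j \in Cr_q(f_2)} n^{f_1}(\delta_1, \sigma_j)\sigma_j$ and determine which terms are nonzero. The trivial (length-zero) gradient path contributes $1$ to $n^{f_1}(\delta_1, \delta_1)$ whenever $\delta_1$ also lies in $Cr_q(f_2)$. Any non-trivial $V_1$-path $\delta_1 \stackrqarrow{V_1(q-1,q)}{} \sigma_j$ requires $\sigma_j$ to be $V_1$-paired at the relevant dimensions, and the only simplex of $Cr_q(f_2)$ with this property is $\tilde{\sigma}$, which occurs only when $q = i$. Hence for $q \neq i$ only the identity term survives and $h_q(\delta_1) = \delta_1$; when $q = i$ the additional contribution $n^{f_1}(\delta_1, \tilde{\sigma})\tilde{\sigma}$ appears.

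I would then treat the death transition symmetrically, expanding $g_q(\delta_2)$ and exploiting $V_2 \subset V_1$: every $V_1$-critical simplex is also $V_2$-critical, so no non-trivial $V_2$-path can terminate at an $\alpha_j \in Cr_q(f_1)$. For $\delta_2 \notin \{\tilde{\sigma}, \tilde{\alpha}\}$ only the trivial path survives, giving $\delta_2$. For $\delta_2 = \tilde{\sigma}$, neither the trivial path (since $\tilde{\sigma} \notin Cr_i(f_1)$) nor any non-trivial path contributes, yielding $0$. For $\delta_2 = \tilde{\alpha}$, the sum cannot be simplified beyond the raw definitional expression $\sum_{\alpha \in Cr_{i-1}(f_1)} n^{f_2}(\tilde{\alpha}, \alpha)\alpha$, which is precisely the stated formula.

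The main obstacle is not conceptual but careful dimension-bookkeeping: one must track exactly which dimensions the $V_j$-paths traverse and which simplices can legitimately serve as path endpoints. The key subtlety is that the new pair $(\tilde{\alpha}, \tilde{\sigma})$ lives at $(i-1, i)$ and therefore affects only connectedness paths at those dimensions, which explains the asymmetry between the cases $q = i$ and $q = i-1$ of the birth formula: $\tilde{\alpha}$ does not appear as a correction term in $h_{i-1}(\delta_1)$, since paths at dimensions $(i-2, i-1)$ cannot terminate at $\tilde{\alpha}$, even though $\tilde{\alpha}$ is itself a new $f_2$-critical simplex.
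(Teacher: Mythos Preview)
Your approach is correct and is exactly what the paper has in mind: the paper gives no proof at all beyond the remark that ``these lemmas simply follow from the assumptions in the definition of birth-death transitions,'' and your proposal is precisely the unpacking of those definitions that the paper omits. Your analysis of which gradient paths can exist---trivial paths giving the identity term, and non-trivial paths being obstructed whenever the target simplex is critical for the relevant vector field---is the correct mechanism.

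One caveat worth flagging: you open by asserting that $V_1 = V_2 \cup \{(\tilde\alpha,\tilde\sigma)\}$, i.e.\ that the two gradient vector fields differ by exactly the single pair $(\tilde\alpha,\tilde\sigma)$. The paper's Definition~3.1 literally imposes only conditions on the critical \emph{sets} (equality away from dimensions $i-1,i$, and containment with one extra element in those dimensions), and does not explicitly require that $\tilde\alpha$ and $\tilde\sigma$ be paired to each other in $V_1$. Your argument for $h_{i-1}(\delta_1)=\delta_1$ uses this: you need $\tilde\alpha$ to be paired \emph{upward} in $V_1$ so that no $V_1$-path in dimensions $(i-2,i-1)$ can terminate at $\tilde\alpha$. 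This is clearly the intended reading---it is exactly the situation of Lemma~4.2, and the paper's later applications only ever use birth-death transitions arising from adding or removing a single primitive pair---but strictly speaking it is an interpretation of the definition rather than a consequence of it. You might note this explicitly, or simply observe that the argument goes through verbatim under the hypothesis $V_2 \subset V_1$ with $V_1 \setminus V_2 = \{(\tilde\alpha,\tilde\sigma)\}$, which is the only case the paper actually invokes.
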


\begin{lemma}
Let 
$h:C_q^{f_1} \to C_q^{f_2}$ and $g:C_q^{f_2} \to C_q^{f_1}$ be a birth transition and 
a death transition, 
respectively.

Then, with respect to the boundary homomorphisms 
$\partial_{\ast}^{f_1}$ and $\partial_{\ast}^{f_2}$,
the following equations (1) to (5) 
hold.

Namely, when  $ q\neq i, i+1   \text{ and } 
\delta\neq \tilde{\alpha},$
\begin{equation}\label{1}
\partial^{f_1}_q (\delta)= \partial^{f_2}_q (\delta).
\end{equation}

For any  $\tau \in Cr_{i+1}(f_2)$,
\begin{equation} \label{2}
\partial_{i+1}^{f_2}(\tau )=  \partial_{i+1}^{f_1}(\tau ) + \sum_{\sigma \in Cr_{i}(f_1)} n^{f_1} (\tau, \sigma)\cdot n^{f_1}(\sigma, \tilde{\sigma})  \tilde{\sigma}.
\end{equation}
 For any $\sigma \in Cr_{i}(f_2)$ and $\sigma \neq \tilde{\sigma}$,
 \begin{equation} \label{3}
\partial_{i}^{f_2}(\sigma )=\partial_i^{f_1}(\sigma) + n^{f_1}(\sigma, \tilde{\sigma})   (k \tilde{\alpha} + \sum_{\alpha \in Cr_{i-1}(f_1)   }    k \cdot n^{f_2} (\tilde{\alpha}, \alpha   )  \alpha     ).
\end{equation}

 \begin{equation} \label{4}
 \partial_{i}^{f_2}(\tilde{\sigma })=  k \tilde{\alpha}  +   \sum_{\alpha \in Cr_{i-1}(f_1)}  k\cdot n^{f_2} (\tilde{\alpha},  \alpha) \alpha  .
\end{equation}

 \begin{equation} \label{5}
  \partial_{i-1}^{f_2}(\tilde{\alpha })= \sum_{\alpha \in Cr_{i-1}(f_2)} n^{f_2} (\tilde{\alpha}, \alpha)   \sum_{\omega \in Cr_{i-2} (f_1) }n^{f_1} (\alpha, \omega) \omega.
 \end{equation}
\end{lemma}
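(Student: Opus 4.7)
The plan is to verify each of the five equations by a direct comparison of gradient path counts in $V_1$ and $V_2$, exploiting the structural hypothesis of the birth-death setup. The key observation is that $V_1$ and $V_2$ can differ only along the gradient structure mediating the creation/cancellation of the pair $(\tilde{\sigma}, \tilde{\alpha})$: they coincide on $(q,q+1)$-pairs for $q\neq i-1$, and hence all $(q-1,q)$-gradient paths between critical simplices for $q\neq i,i+1$ are identical in both vector fields.

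Equation \eqref{1} follows at once: with $q \neq i,i+1$ and $\delta \neq \tilde{\alpha}$, both $\partial^{f_1}_q(\delta)$ and $\partial^{f_2}_q(\delta)$ are sums over identical gradient paths terminating at a common set of critical $(q-1)$-simplices, since the exclusion $\delta\neq\tilde{\alpha}$ covers the only case $(q=i-1)$ where the source simplex could itself be new.

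For equations \eqref{2}--\eqref{5}, I would split each $\partial^{f_2}_q$-sum according to whether the target simplex lies in $Cr_*(f_1)\cap Cr_*(f_2)$ or is one of the new simplices $\tilde{\sigma}, \tilde{\alpha}$. Contributions to common critical simplices give the $\partial^{f_1}$-term, while contributions involving $\tilde{\sigma}$ or $\tilde{\alpha}$ come from $V_2$-paths that meet the cancellation structure and hence factor as a $V_1$-path leaving the common structure followed by a segment along the canceled gradient path. This factorization produces the product $n^{f_1}(\tau,\sigma)\cdot n^{f_1}(\sigma,\tilde{\sigma})$ in \eqref{2}, the multiplicities $k$ and $k\cdot n^{f_2}(\tilde{\alpha},\alpha)$ in \eqref{4}, and the bracketed corrections in \eqref{3}. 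Equation \eqref{5} is proved analogously, decomposing $V_2$-paths out of $\tilde{\alpha}$ via intermediate $(i-1)$-critical simplices of $f_1$ at which the canceled structure re-enters the unchanged part of the gradient field.

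The main obstacle lies in carefully setting up the bijection between $V_2$-paths that pass through the cancellation structure and the corresponding concatenations of $V_1$-paths with canceled-path segments, while tracking incidence numbers and the multiplicity $k=n^{f_2}(\tilde{\sigma},\tilde{\alpha})$ to obtain the exact coefficients. Equation \eqref{3} is the most delicate: its correction term combines factorizations at both $\tilde{\sigma}$ and $\tilde{\alpha}$, weighted consistently by $k$ and $n^{f_1}(\sigma,\tilde{\sigma})$, so the verification amounts to checking that iterating the path-factorization once through $\tilde{\sigma}$ and once through $\tilde{\alpha}$ reproduces the stated formula without double-counting.
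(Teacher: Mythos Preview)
The paper offers no proof here beyond the sentence ``these lemmas simply follow from the assumptions in the definition of birth-death transitions.'' Your proposal is therefore already more substantive than the paper's own treatment: the plan to compare $V_1$- and $V_2$-gradient paths directly, and to split each $\partial^{f_2}$-sum according to whether the target lies in $Cr_\ast(f_1)\cap Cr_\ast(f_2)$ or is one of $\tilde\sigma,\tilde\alpha$, is precisely the computation the paper leaves implicit, and the path-factorization you describe is what produces the product coefficients in \eqref{2}--\eqref{5}.

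One caution. Your ``key observation'' that $V_1$ and $V_2$ agree on all $(q,q+1)$-pairs with $q\neq i-1$ is exactly what is needed, but it is not a consequence of the hypotheses as literally stated. The setup preceding Definition~\ref{birth-death transition} constrains only the critical \emph{sets} $Cr_q(f_j)$; two acyclic matchings can share identical critical sets while differing arbitrarily in their pairings, and in that generality equation~\eqref{1} would fail outright. The paper evidently has in mind the minimal modification $V_1=V_2\cup\{(\tilde\alpha,\tilde\sigma)\}$ (this is made explicit in Lemma~\ref{482}, where the transitions arise from adjacent simplices of $\mathcal{M}(K)$), and under that reading your argument goes through. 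Just be aware that you are invoking an implicit strengthening of the stated hypotheses rather than something derivable from them; a written-out proof should flag this.
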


\begin{remark}
The above lemmas 
omit the case $q=0$,
in which  the connectedness coefficient of $n^{f_1}$ and $n^{f_2}$ may 
exchange due to the convention regarding $0$-dimensional paths (see Section~2.3). 
However, this does not alter the validity of the results.
\end{remark}

\begin{theorem}\cite[Theorem 4.1]{no.2}.
Let $f_1, f_2:K \to \R$ 
be discrete Morse functions.
Suppose that 
$h_q:C_q^{f_1} \to C_q^{f_2}$ and $g_q:C_q^{f_2} \to C_q^{f_1}$ be a birth transition and 
a death transition, 
respectively.

Then, 
$h$ and $g$ are chain maps.
\end{theorem}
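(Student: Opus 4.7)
The plan is to verify directly that $\partial^{f_2} \circ h_q = h_{q-1} \circ \partial^{f_1}$ and $\partial^{f_1} \circ g_q = g_{q-1} \circ \partial^{f_2}$ hold on each generator, organising the computation by dimension. Because the maps $h_q$ and $g_q$ reduce to the identity on generators whenever $q \notin \{i-1,i\}$, and because equation~\eqref{1} already guarantees $\partial^{f_1}_q = \partial^{f_2}_q$ on generators in those dimensions, the chain-map identity is automatic as soon as both the source and target dimensions avoid the ``active'' range $\{i-1,i\}$. Only a short list of dimensions requires actual calculation.

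For the birth transition $h$, the two nontrivial cases are $q = i+1$ and $q = i$. When $q = i+1$ I fix $\tau \in Cr_{i+1}(f_1) = Cr_{i+1}(f_2)$; since $h_{i+1}(\tau) = \tau$, the identity reduces to the claim that $\partial^{f_2}(\tau)$ equals $\sum_{\sigma} n^{f_1}(\tau,\sigma)\, h_i(\sigma)$, and expanding the right-hand side using the explicit formula for $h_i$ reproduces precisely equation~\eqref{2}. When $q = i$ I fix $\delta \in Cr_i(f_1)$, expand $h_i(\delta) = \delta + n^{f_1}(\delta,\tilde{\sigma})\tilde{\sigma}$, then apply $\partial^{f_2}$ term by term using equations~\eqref{3} and~\eqref{4}. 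The correction supplied by $\partial^{f_2}(\tilde{\sigma})$ is designed to absorb the discrepancy between $\partial^{f_2}(\delta)$ and $\partial^{f_1}(\delta)$; after collecting coefficients the result equals $h_{i-1}(\partial^{f_1}(\delta)) = \partial^{f_1}(\delta)$.

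For the death transition $g$, the mirrored nontrivial dimensions are $q = i$ and $q = i-1$. In dimension $i$ I verify the two subcases separately. For $\sigma \in Cr_i(f_2) \setminus \{\tilde{\sigma}\}$ we have $g_i(\sigma) = \sigma$, and equation~\eqref{3} combined with the defining formula for $g_{i-1}$ on $\tilde{\alpha}$ makes the $n^{f_1}(\sigma,\tilde{\sigma})$-correction collapse, leaving $\partial^{f_1}(\sigma)$. For $\sigma = \tilde{\sigma}$, $g_i(\tilde{\sigma}) = 0$ forces $g_{i-1}(\partial^{f_2}(\tilde{\sigma})) = 0$; substituting equation~\eqref{4} and applying $g_{i-1}$—which sends $\tilde{\alpha}$ to $\sum n^{f_2}(\tilde{\alpha},\alpha)\alpha$ and fixes each other generator—produces two summands that cancel. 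In dimension $i-1$ the only nontrivial generator is $\tilde{\alpha}$; equation~\eqref{5} together with $g_{i-2} = \mathrm{id}$ directly identifies $\partial^{f_1}(g_{i-1}(\tilde{\alpha}))$ with $g_{i-2}(\partial^{f_2}(\tilde{\alpha}))$.

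The main obstacle is the coefficient bookkeeping in the $q = i$ case for both $h$ and $g$: one must track how the double sums counting gradient paths through the new critical pair $(\tilde{\sigma},\tilde{\alpha})$ interact with the correction terms in~\eqref{3}--\eqref{4}, and verify that the incidence signs align so the required cancellations actually occur. Everything else, once these coefficient identities are in hand, is a routine generator-by-generator verification.
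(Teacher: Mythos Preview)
The paper does not actually prove this theorem here; it simply quotes \cite[Theorem~4.1]{no.2} without argument. Your plan---verify $\partial^{f_2}h=h\partial^{f_1}$ and $\partial^{f_1}g=g\partial^{f_2}$ generator by generator, using the explicit descriptions in Lemmas~3.1 and~3.2 and noting that only the dimensions $i+1,i,i-1$ require work---is exactly the natural approach and is presumably what the cited reference carries out.

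One concrete warning: with the formulas \emph{as printed in this paper}, the cancellation you announce in the $q=i$ case does not occur. Write $X=k\tilde\alpha+\sum_{\alpha}k\,n^{f_2}(\tilde\alpha,\alpha)\alpha$. Equation~\eqref{3} gives $\partial^{f_2}(\delta)=\partial^{f_1}(\delta)+n^{f_1}(\delta,\tilde\sigma)X$ and equation~\eqref{4} gives $\partial^{f_2}(\tilde\sigma)=X$, so
\[
\partial^{f_2}\bigl(h_i(\delta)\bigr)=\partial^{f_2}(\delta)+n^{f_1}(\delta,\tilde\sigma)\,\partial^{f_2}(\tilde\sigma)=\partial^{f_1}(\delta)+2\,n^{f_1}(\delta,\tilde\sigma)\,X,
\]
not $\partial^{f_1}(\delta)$. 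The same doubling hits the death side: $g_{i-1}\bigl(\partial^{f_2}\tilde\sigma\bigr)=k\,g_{i-1}(\tilde\alpha)+k\sum_\alpha n^{f_2}(\tilde\alpha,\alpha)\alpha=2k\sum_\alpha n^{f_2}(\tilde\alpha,\alpha)\alpha$, not $0$. (The same sign inconsistency is already visible in the paper's own proof of Theorem~3.5, where the stated value of $h\circ g(\tilde\alpha)$ disagrees with Lemma~3.1 as written.) Evidently one of the plus signs in \eqref{3}--\eqref{4} or in the formula for $g_{i-1}(\tilde\alpha)$ should be a minus. Since you yourself flag ``verify that the incidence signs align'' as the main obstacle, you should re-derive these identities from the gradient-path definitions (or consult \cite{no.2} directly) rather than rely on the signs printed here; once that is done your outline goes through.
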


Besides chain maps,
these maps 
also   induce isomorphisms in homology. 
We show that $h$ and $g$ form a 
chain-homotopy equivalence as follows.
\begin{theorem}

Suppose $h:C_\ast^{f_1} \to C_\ast^{f_2}$ and 
$g:C_\ast^{f_2} \to C_\ast^{f_1}$ are a birth transition and 
a death transition, respectively. 

Then, over $\mathbb{Q}$, 
$h$ and $g$ are chain-homotopy inverses to each other. 
That is, the induced chain maps
$h\circ g$ and  $g\circ h$
are chain homotopic to the
identities on rational coefficients.

\end{theorem}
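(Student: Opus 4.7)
The plan is to first verify $g \circ h = \operatorname{id}_{C_\ast^{f_1}}$ exactly from the explicit formulas, and then to deduce $h \circ g \simeq \operatorname{id}_{C_\ast^{f_2}}$ by splitting $C_\ast^{f_2}$ and chain-contracting $\ker g$ over $\mathbb{Q}$.

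First I would check $g_q \circ h_q = \operatorname{id}$ on each generator $\delta_1 \in Cr_q(f_1)$ by direct substitution into the lemma's formulas. For $q\neq i$, both $h_q$ and $g_q$ restrict to the identity on the shared basis $Cr_q(f_1)\subset Cr_q(f_2)$, and for $q=i$ the correction term $n^{f_1}(\delta_1,\tilde\sigma)\tilde\sigma$ produced by $h_i$ is annihilated by $g_i$ because $g_i(\tilde\sigma)=0$. Hence $g\circ h = \operatorname{id}$ on the nose, so no homotopy is needed in this direction.

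Since $h$ is a chain map with chain retraction $g$, the subcomplexes $h(C_\ast^{f_1})$ and $\ker g$ produce a direct-sum decomposition $C_\ast^{f_2} = h(C_\ast^{f_1}) \oplus \ker g$ as chain complexes, via the unique splitting $y = hg(y) + (y-hg(y))$. Under this decomposition $hg$ is the identity on the first summand and zero on the second, so $hg-\operatorname{id}$ vanishes on $h(C_\ast^{f_1})$ and equals $-\operatorname{id}$ on $\ker g$; constructing the desired chain homotopy therefore reduces to chain-contracting $\ker g$. Reading off the formulas for $g$, the complex $\ker g$ is supported only in degrees $i$ and $i-1$, one-dimensional in each, generated by $\tilde\sigma$ in degree $i$ and by $v := \tilde\alpha - g(\tilde\alpha)$ in degree $i-1$.

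Using equation~\eqref{4} together with the fact that $\ker g$ is a subcomplex, $\partial^{f_2}(\tilde\sigma)$ must lie in the one-dimensional space $\ker g \cap C_{i-1}^{f_2} = \mathbb{Q}\,v$; matching the coefficient of $\tilde\alpha$ identifies this element as $\pm k\,v$. Because $k = n^{f_2}(\tilde\sigma,\tilde\alpha)\neq 0$ is invertible over $\mathbb{Q}$, $\ker g$ is the two-term complex $\mathbb{Q}\,\tilde\sigma \xrightarrow{\pm k} \mathbb{Q}\,v$, which is contractible via $s(v)=\mp\tfrac{1}{k}\tilde\sigma$ and $s=0$ elsewhere on $\ker g$, satisfying $\partial s + s\partial = -\operatorname{id}_{\ker g}$ by a one-line check. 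Extending $s$ by zero on $h(C_\ast^{f_1})$ yields a chain homotopy on $C_\ast^{f_2}$ realising $h\circ g \simeq \operatorname{id}$. The main obstacle I expect is this third step, where one must carefully reconcile the formulas for $g(\tilde\alpha)$, equation~\eqref{4}, and the chain-map identity $g\partial^{f_2}=\partial^{f_1}g$ to confirm that $\partial^{f_2}(\tilde\sigma)$ genuinely lies in $\mathbb{Q}\,v$ with scalar $\pm k$; this is the unique point where rational coefficients are essential, via the division by $k$.
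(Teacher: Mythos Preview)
Your proposal is correct and ends up producing the very same homotopy the paper writes down, but you reach it by a more structural route. The paper proceeds by brute force: it computes $h\circ g$ on each basis element of $C_\ast^{f_2}$, posits $s(\tilde\alpha)=-\tfrac{1}{k}\tilde\sigma$ (zero elsewhere) by inspection, and then checks $\partial^{f_2}s+s\partial^{f_2}=h\circ g-\mathrm{id}$ case by case using equation~\eqref{4}. You instead use the retraction $g\circ h=\mathrm{id}$ to split $C_\ast^{f_2}=h(C_\ast^{f_1})\oplus\ker g$ as chain complexes, identify $\ker g$ as the two-term complex $\mathbb{Q}\tilde\sigma\to\mathbb{Q}\,v$ with $v=\tilde\alpha-g(\tilde\alpha)$, and contract it. Since $v$ differs from $\tilde\alpha$ by an element of $h(C_{i-1}^{f_1})$ on which your $s$ vanishes, your homotopy agrees with the paper's on the standard basis. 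The advantage of your argument is that it explains \emph{why} the homotopy exists (a one-dimensional acyclic kernel) rather than verifying a formula pulled from thin air; the paper's computation, on the other hand, is entirely explicit and requires no abstract splitting. Your caution in writing the boundary scalar as $\pm k$ is prudent: the sign depends on conventions in equation~\eqref{4} versus the definition of $g(\tilde\alpha)$, and the paper's own formulas are not entirely consistent on this point, but either sign yields a contractible two-term complex once $k\neq 0$ is invertible in $\mathbb{Q}$.
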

\begin{proof}
Suppose that two additional $f_2$-critical simplices are 
$\tilde{\sigma} \in \mathrm{Cr}_i(f_2)$ 
and $\tilde{\alpha} \in \mathrm{Cr}_{i-1}(f_2)$,
with $n^{f_2}(\tilde{\sigma}, \tilde{\alpha})=k$.
For any $f_1$-critical simplex $\delta$, one checks immediately that 
$g\circ h(\delta)=\delta$. Hence 
\[ g\circ h = \mathrm{id} \quad\]
 \text{on} $C^{f_1}_\ast .$ 

On $C^{f_2}_\ast$, the only nontrivial cases are the additional pair 
$\tilde{\sigma},\tilde{\alpha}$. One computes
\[
h\circ g(x) = 
\begin{cases}
0, & x=\tilde{\sigma},\\[6pt]
-\tilde{\alpha}-\sum\limits_{\beta} n^{f_2}(\tilde{\alpha},\beta)\,\beta, & x=\tilde{\alpha},\\[10pt]
x, & \text{otherwise}.
\end{cases}
\]

Define a homotopy
\[
s: C^{f_2}_\ast \to C^{f_2}_{\ast+1}, 
\qquad
s(x) = 
\begin{cases}
-\tfrac{1}{k}\tilde{\sigma}, & x=\tilde{\alpha},\\[6pt]
0, & \text{otherwise}.
\end{cases}
\]

Here $\tfrac{1}{k}\in \mathbb{Q}$, so this definition is valid over
$\mathbb{Q}.$

Then.
\[
(\partial^{f_2}s+s\partial^{f_2})(x) = 
\begin{cases}
-\tilde{\sigma}, & x=\tilde{\sigma},\\[6pt]
-\tilde{\alpha}-\sum\limits_{\beta} n^{f_2}(\tilde{\alpha},\beta)\,\beta, & x=\tilde{\alpha},\\[10pt]
0, & \text{otherwise}.
\end{cases}
\]

Thus
\[
h\circ g - \mathrm{id} = \partial^{f_2}s+s\partial^{f_2}.
\]

Therefore $g\circ h=\mathrm{id}$ and $h\circ g\simeq \mathrm{id}$, 
as chain maps over $\mathbb{Q}$, so $h$ and $g$ are chain-homotopy inverses in this setting.

\end{proof}

See Remark 5.3 for more comments on coefficients.

\begin{lemma}
\label{lem:equivalence}
Let $f_1, f_2: K\to \mathbb{R}$
be discrete Morse functions
and $V_1, V_2$ be the corresponding gradient vector
field, respectively.
We use 
$h_q: C_q^{f_1} \to C_q^{f_2}$ 
and 
$g_q: C_q^{f_2} \to C_q^{f_1}$ 
to represent the connectedness homomorphisms.

Then, $V_1=V_2$
if and only if $h=g=\id.$
\label{f_1=f_2}
\end{lemma}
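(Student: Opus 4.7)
My plan is to prove both implications separately. For the forward direction $V_1 = V_2 \Rightarrow h = g = \id$, I would exploit that the set of critical simplices is determined by the gradient vector field: a simplex is critical exactly when it lies in no $V$-pair. Thus $V_1 = V_2$ immediately yields $Cr_q(f_1) = Cr_q(f_2)$ for all $q$. Moreover, any non-trivial $V$-gradient path $\beta_0 \stackrqarrow{V(q-1,q)}{} \beta_r$ visits $q$-simplices $\beta_i$ that are each paired with a $(q-1)$-simplex, and the same conclusion applies to the endpoints themselves; hence all are non-critical. Consequently, the only $V_i$-gradient path with both endpoints in $Cr_q(f_i)$ is the trivial one at a common simplex, so $n^{f_i}(\tilde{\sigma}, \tilde{\sigma}') = \delta_{\tilde{\sigma}, \tilde{\sigma}'}$. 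Plugging this into the defining formulas for $h_q$ and $g_q$ gives the identity on every basis element, and the analogous argument using $V_2$-paths handles the $q = 0$ case.

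For the reverse direction $h = g = \id \Rightarrow V_1 = V_2$, I would first observe that $h_q: C_q^{f_1} \to C_q^{f_2}$ can equal the identity only if the two chain groups share a generating basis, forcing $Cr_q(f_1) = Cr_q(f_2)$ for every $q$ and hence an identification of the non-critical simplices as well. It then remains to show that every non-critical simplex has the same partner in $V_1$ as in $V_2$. The plan is to argue by contradiction: if some pair $(\alpha^{(q-1)}, \beta^{(q)})$ lies in $V_1 \setminus V_2$, then $\alpha$ and $\beta$ are both non-critical under $V_2$ and paired differently there. Using the acyclicity of $V_1$ and $V_2$, I would trace both flows forward from this mismatched pair until they reach critical simplices, producing a non-trivial $V_1$-gradient path between two critical $q$-simplices which the $V_2$-flow does not realize. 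Counting this path yields a nonzero off-diagonal contribution to $h_q$ or $g_q$, contradicting the hypothesis $h = g = \id$.

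The main obstacle will be making the tracing argument of the reverse direction rigorous: a single local discrepancy between $V_1$ and $V_2$ does not, on its own, produce distinct critical endpoints, and the propagated difference must be shown to actually reach a critical simplex rather than be absorbed by compensating local rearrangements. I expect the cleanest route is an induction on dimension, handling top-dimensional pair mismatches first, where the pairings are essentially determined by the critical set together with the available cofaces, and then descending. The acyclicity of both $V_1$ and $V_2$ guarantees that the traced flows terminate, and it is precisely this termination that converts a local pair-level discrepancy into a visible change in the connectedness coefficients between critical simplices of the same dimension.
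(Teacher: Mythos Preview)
Your forward direction is essentially the paper's argument, spelled out more carefully: once $V_1=V_2$ the critical sets coincide, and the terminal $q$-simplex $\beta_r$ of any non-trivial $V(q{-}1,q)$-path is paired, hence non-critical; so the only gradient path between two critical $q$-simplices is the trivial one and $h=g=\id$.

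The reverse direction, however, cannot be completed as planned, because the implication $h=g=\id \Rightarrow V_1=V_2$ is false in general. Take $K=\partial\Delta^3$ with vertices $1,2,3,4$ and set
\[
V_1=\{(2,12),(3,13),(4,14),(23,123),(24,124),(34,134)\},
\]
\[
V_2=\{(2,12),(3,23),(4,14),(13,123),(24,124),(34,134)\}.
\]
Both are acyclic with the same critical set $\{1,\,234\}$, yet $V_1\neq V_2$. Since $C_1^{V_1}=C_1^{V_2}=0$ and the unique critical $0$- and $2$-simplices each admit only the trivial gradient path to themselves under either field, one gets $h=g=\id$. This is precisely the failure mode your ``main obstacle'' anticipates: the discrepancy at the pairs $(3,13)$ versus $(3,23)$ and $(23,123)$ versus $(13,123)$ lives entirely among simplices that are non-critical for both vector fields, and there are no critical $1$-simplices for a traced path to terminate at, so no off-diagonal connectedness coefficient is ever produced. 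The paper's own proof glosses over this step with the bare assertion that coinciding critical sets force coinciding gradient vector fields; your instinct that more was needed was right, but no amount of tracing will close the gap without an additional hypothesis. (Only the forward implication is actually invoked later in the paper.)
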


\begin{proof}
Suppose $V_1 = V_2$. 
Then, for every critical simplex $\sigma$, 
$$h_q(\sigma) = \sigma = g_q(\sigma).$$
This follows from the absence of gradient paths between 
distinct $q$-dimensional critical simplices.

Conversely, if $h = g = \mathrm{id}$, then no new critical simplices are introduced or removed, and the gradient vector fields must coincide.
\end{proof}

\begin{example}
Consider the simplicial complex $K$ with discrete Morse functions $f_1$ and $f_2$ depicted in Figure \ref{transition example}.
We follow the definition to study the connectedness homomorphisms between those two 
discrete Morse complexes.

One can check that the connectedness homomorphism 
$h: C_{\ast}^{f_1} \to C_{\ast}^{f_2}$
is a birth transition
generating 
a pair of $f_2$-critical simplices $(\tilde{\alpha}, \tilde{\sigma})$;
and 
the connectedness homomorphism 
$g: C_{\ast}^{f_2} \to C_{\ast}^{f_1}$
is a death transition
cancelling that pair of simplices.

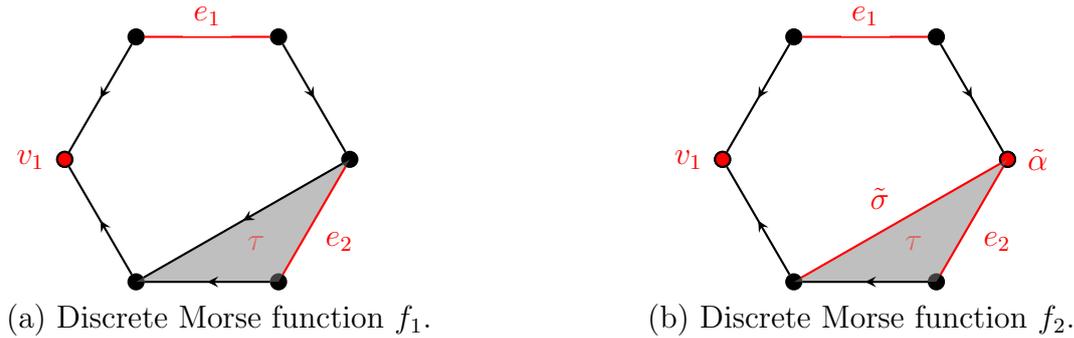
\begin{figure}[h!]

\begin{minipage}{.5\textwidth}

\begin{tikzpicture}[>=stealth, thick, scale=1.25, decoration={markings, mark=at position 0.5 with {\arrow{>}}}]
  % Halve the radius for the original cyclic graph to reduce edge lengths
  \foreach \x in {1,2,...,6}{
    % Adjusting the radius from 2cm to 1cm
    \node[draw, circle, fill=black, inner sep=2pt, label=above:] (vertex\x) at ({360/6 * (\x - 1)}:1.5cm) {};

  }

    \node[draw, circle, fill=red, inner sep=2pt, label={[text=red]left:$v_1$}] (vertex8) at ($(vertex4)$) {};

  % Drawing the edges with arrows for the original cycle
\fill[gray, opacity=0.5] (vertex1.center) -- (vertex5.center) -- (vertex6.center) -- (vertex1.center) 
node[midway, left, shift={(-0.5,-0.3)},text=red] {$\tau$};

\draw[postaction={decorate}] (vertex2) -- (vertex1);
%\draw[postaction={decorate}] (vertex1) -- (vertex6);
\draw[postaction={decorate}] (vertex6) -- (vertex5);
\draw[postaction={decorate}] (vertex3) -- (vertex4);
\draw[postaction={decorate}] (vertex5) -- (vertex4);
\draw[postaction={decorate}] (vertex1) -- (vertex5);
\draw[red] (vertex2) -- (vertex3) node[midway, above, fill=white] {$e_1$} ;
\draw[red] (vertex1) -- (vertex6) node[midway, below right, fill=white] {$e_2$} ;
%\draw[postaction={decorate}] (vertex7) -- (vertex8);
%\draw[postaction={decorate}] (vertex5) -- (vertex7);

\end{tikzpicture}

(a) Discrete Morse function $f_1$.
\end{minipage}%
\begin{minipage}{.5\textwidth}

\centering

\begin{tikzpicture}[>=stealth, thick, scale=1.25, decoration={markings, mark=at position 0.5 with {\arrow{>}}}]
  % Halve the radius for the original cyclic graph to reduce edge lengths
  \foreach \x in {1,2,...,6}{
    % Adjusting the radius from 2cm to 1cm
    \node[draw, circle, fill=black, inner sep=2pt, label=above:] (vertex\x) at ({360/6 * (\x - 1)}:1.5cm) {};

  }

    \node[draw, circle, fill=red, inner sep=2pt, label={[text=red]left:$v_1$}] (vertex8) at ($(vertex4)$) {};
 \node[draw, circle, fill=red, inner sep=2pt, label={[text=red]right:$\tilde{\alpha}$}] (vertex9) at ($(vertex1)$) {};
  % Drawing the edges with arrows for the original cycle
\fill[gray, opacity=0.5] (vertex1.center) -- (vertex5.center) -- (vertex6.center) -- (vertex1.center) 
node[midway, left, shift={(-0.5,-0.3)},text=red] {$\tau$};

\draw[postaction={decorate}] (vertex2) -- (vertex1);
%\draw[postaction={decorate}] (vertex1) -- (vertex6);
\draw[postaction={decorate}] (vertex6) -- (vertex5);
\draw[postaction={decorate}] (vertex3) -- (vertex4);
\draw[postaction={decorate}] (vertex5) -- (vertex4);
%\draw[postaction={decorate}] (vertex1) -- (vertex5);
\draw[red] (vertex2) -- (vertex3) node[midway, above, fill=white] {$e_1$} ;
\draw[red] (vertex1) -- (vertex6) node[midway, below right, fill=white] {$e_2$} ;
\draw[red] (vertex1) -- (vertex5) node[midway, above left, fill=white] {$\tilde{\sigma}$} ;
%\draw[postaction={decorate}] (vertex7) -- (vertex8);
%\draw[postaction={decorate}] (vertex5) -- (vertex7);

\end{tikzpicture}

(b) Discrete Morse function $f_2$.
\end{minipage}

\label{transition example}
\caption{A birth-death transition.}

\label{transition example}
\end{figure}

\end{example}

Note that the formulation of birth-death transitions and Theorem 3.5
recover, on the chain level, 
the essential ideas presented
in \cite[Section 11]{Morse Theory for Cell Complexes}
regarding the cancellation 
of critical cells.
Our framework provides an explicit 
algebraic mechanism via birth-death transitions,
extending 
the intuition  of smooth Morse theory \cite{laudenbach} into the discrete combinatorial 
setting.
\section{Connecting discrete Morse functions via birth-death transitions}

In discrete Morse theory, the space of all discrete Morse functions on a simplicial complex $K$ is often studied through  their associated gradient vector fields.
Two discrete Morse functions $f_1, f_2: K \to \mathbb{R}$ are 
considered \emph{equivalent} when their gradient vector fields $V_1$ and $V_2$ coincide.

The set of all
gradient vector fields forms a simplicial complex, known as the \textit{complex of discrete Morse functions},  denoted $\M(K)$,  introduced by Chari and Joswig \cite{chari}. 
This complex parametrizes acyclic matchings (gradient vector fields) 
in the Hasse diagrams of $K$.
In particular, when 
$K$
 is a graph, this simplicial complex coincides with the \textit{complex of rooted forests},  studied by Kozlov \cite{Kozlov}.

Also in \cite{chari},  Chari-Joswig asks how to relate any two 
acyclic matchings by a sequence of finite steps.
We resolve this affirmatively on the chain level, showing that any two discrete Morse functions 
$f_1, f_2:K \to \R$
can be connected through a finite sequence of birth-death transitions, as defined in Section 3.

We first define the complex of discrete Morse functions.
\begin{definition}
Given a simplicial complex $K$,
we define the \textit{complex of discrete Morse functions} $\mathcal{M}(K)$
as the collection of subsets of edges in the Hasse diagram 
of $K$ that form acyclic matchings.

Alternatively, we may define it more explicitly:

We define a simplicial complex   $\MMK$
on $K$ as follows.
We say that a \textit{primitive vector field} of $K$
is a set  $P$ containing one  pair of simplices 
$\alpha$  and  $\sigma$ of $K$ such that 
$\alpha \prec \sigma$ and $\dim \sigma- \dim \alpha= 1$.

We let $\mathcal{P}(K)= \{P_1, P_2, \ldots, P_n  \}$ 
be the set of all primitive vector fields of $K$.

The simplices of $\MMK$ are defined as:
\begin{itemize}
\item  $0$-simplices: $\widetilde{\mathcal{M}}^0(K):= \mathcal{P}(K)$;
\item  $q$-simplices: $( P_{i_0}, P_{i_1}, \ldots, P_{i_q} )\in 
 \widetilde{\mathcal{M}}^q(K)$
if $V= \{ P_{i_0}, P_{i_1}, \ldots, P_{i_q} \}$ 
form an acyclic matching (i.e., a gradient vector field).

\end{itemize}

To ensure that all Morse functions are included, we augment
$\MMK$ with the empty simplex,
that is,
$$\mathcal{M}(K)= \MMK \cup \{\es\}.$$

\end{definition}

Note that although the empty simplex $\es$ is sometimes ignored in other papers,
this inclusion of empty simplex to $\MMK$  is 
necessary in our framework to ensure that $\M(K)$ includes all possible discrete 
Morse functions, including the discrete Morse functions where all simplices are critical.

Such discrete Morse function always exist;
for example:
the discrete Morse function
$$f: K\to \R$$
with 
$$f(\sigma)=\dim \sigma$$
corresponds to  the gradient vector field $V_0$,  in which every simplex
is critical.

Recall that in a simplicial complex,
the empty simplex $\es$
has dimension $-1$
and is a face of every vertex.
It is not hard to verify that 
if $K$ is a simplicial complex, then $K \cup \{\es\}$  is still an (abstract) simplicial complex.
We consider the poset topology of this augmented simplicial complex ordered by inclusion.

As one of the most important 
statement of Cerf theory,
any two Morse functions $\phi_1, \phi_2$ 
on the same manifold can be related by a 
finite sequence of moves involving the creation and cancellation of critical pairs.
We  can use 
the birth-death transitions
to represent a discrete version of 
this sequence between two discrete Morse functions $f_1, f_2:K \to \mathbb{R}$.
We first study the local behavior of such transitions in the complex $\mathcal{M}(K)$.

By $\overline{\sigma}_1, \overline{\sigma}_2 \in \widetilde{\mathcal{M}} (K) $,
we denote the simplex in $\MMK$
corresponding to $V_1,  V_2$,
respectively.

\begin{lemma}

\label{482}

If $\overline{\sigma}_1 \prec \overline{\sigma}_2$ 
and $\dim \overline{\sigma}_2- \dim \overline{\sigma}_1=1$,
then 
\begin{itemize}
 \item The connectedness homomorphism $h: C^{V_1}_\ast \to C^{V_2}_\ast$ is a death transition; 
 \item The connectedness  homomorphism $g: C^{V_2}_\ast \to C^{V_1}_\ast$ is a birth transition.
\end{itemize}

\end{lemma}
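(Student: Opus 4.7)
The plan is to translate the face relation $\overline{\sigma}_1\prec\overline{\sigma}_2$ in $\widetilde{\mathcal{M}}(K)$ into explicit combinatorial information about the gradient vector fields $V_1,V_2$, and then verify that this information is exactly the hypothesis of Definition \ref{birth-death transition}, after correctly identifying which of the two vector fields plays the role of ``$f_1$'' and which plays the role of ``$f_2$'' in that definition.

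First I would unpack the face relation. By construction in the preceding definition, a simplex of $\widetilde{\mathcal{M}}(K)$ is nothing but an acyclic matching, viewed as an unordered set of primitive vector fields. Hence $\overline{\sigma}_1\prec\overline{\sigma}_2$ with $\dim\overline{\sigma}_2-\dim\overline{\sigma}_1=1$ says exactly that $V_1\subset V_2$ and $|V_2|=|V_1|+1$, so I can write $V_2=V_1\cup\{(\tilde{\alpha}^{(i-1)},\tilde{\sigma}^{(i)})\}$ where the missing primitive vector field automatically has a one-dimensional gap between its two simplices.

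Next I would read off the critical simplices. Since $\tilde{\alpha}$ and $\tilde{\sigma}$ are the only simplices whose matching status changes when passing from $V_1$ to $V_2$, they are $V_1$-critical and non-$V_2$-critical, while every other simplex is $V_1$-critical if and only if it is $V_2$-critical. Consequently $Cr_i(V_2)\cup\{\tilde{\sigma}\}=Cr_i(V_1)$, $Cr_{i-1}(V_2)\cup\{\tilde{\alpha}\}=Cr_{i-1}(V_1)$, and $Cr_q(V_1)=Cr_q(V_2)$ for every other $q$. This is exactly the numerical setup of Definition \ref{birth-death transition}, provided we relabel by setting $f_1^{\mathrm{def}}:=V_2$ (the matching with fewer critical simplices) and $f_2^{\mathrm{def}}:=V_1$ (the one with the two extra critical simplices $\tilde{\sigma}$ and $\tilde{\alpha}$). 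Under this identification, the connectedness homomorphism $C_\ast^{V_2}\to C_\ast^{V_1}$ (our $g$) is by definition a birth transition, and the connectedness homomorphism $C_\ast^{V_1}\to C_\ast^{V_2}$ (our $h$) is a death transition, which is the conclusion of the lemma.

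I expect the only real source of difficulty to be bookkeeping: the face direction in $\widetilde{\mathcal{M}}(K)$ goes from a smaller matching to a larger one, and hence from more critical simplices to fewer, whereas Definition \ref{birth-death transition} is framed in the opposite direction (from fewer to more). Keeping these two orientations cleanly separated — and in particular making sure that ``birth'' refers to creation of critical simplices rather than creation of matched pairs — is what makes the assignment of $h$ to death and $g$ to birth come out correctly.
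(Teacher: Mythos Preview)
Your argument is correct and matches the paper's approach: translate the codimension-one face relation into $V_2=V_1\cup\{(\tilde{\alpha},\tilde{\sigma})\}$, read off the change in critical sets, and identify $(V_2,V_1)$ with $(f_1,f_2)$ in Definition~\ref{birth-death transition} so that $g$ is a birth and $h$ a death. The one extra step the paper carries out, which you omit, is to argue that the connectedness coefficient $n^{V_1}(\tilde{\sigma},\tilde{\alpha})\neq 0$ (using that $V_2$ is acyclic); Definition~\ref{birth-death transition} does not literally demand this, but it is needed downstream for the chain-homotopy equivalence, so you may wish to add a line to that effect.
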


\begin{proof}
Suppose that 
$\overline{\sigma}_1 \prec \overline{\sigma}_2$ 
and $\dim \overline{\sigma}_2- \dim \overline{\sigma}_1=1$.
Then,
$V_1$ differs $V_2$   from a primitive vector field $(\alpha, \sigma)$.
More precisely,
$V_2=V_1\cup (\alpha, \sigma).$
This implies that
both 
$\alpha$ and $ \sigma$
are $V_1$-critical.

Note that  $(\alpha, \sigma)\in V_2$ implies
 $n^{f_1}(\sigma, \alpha)\neq 0$;
 otherwise,
one of the following contradiction arises
\begin{itemize}
\item there is no $V_1$-gradient path connecting $\sigma$ and $\alpha$; 
\item the coefficients of multiple  $V_1$-gradient  paths cancel with each other.
\end{itemize}
Case (1) contradicts with  either $\alpha\prec \sigma$;
and 
case (2)  forms a cycle with no $V_2$-critical simplices, which contradicts with the 
acyclicity 
of a gradient vector field.

Therefore, 
by definition,
the connectedness homomorphism $g$ is a birth transition,
and the connectedness homomorphism  $h$ is a death transition.
\end{proof}

\begin{example}
Figure 2 $(b)$
illustrates 
an example of the complex of discrete 
Morse functions 
obtained from the simplicial complex $K$ in $(a)$.

We let  $\overline{\sigma}_1=(a,e_1)$ and 
$\overline{\sigma}_2=((a,e_1), (b,e_2))$.
Thus, the corresponding gradient vector fields
$V_1=\{ (a,e_1) \}$
and 
$V_2=\{(a,e_1), (b,e_2) \}$.
Hence, 
one can check that
the 
connectedness homomorphisms
$h$ and $g$ 
are a death and birth transition,
respectively.

\begin{figure}[h!]
\label{complex example}

\begin{minipage}{.5\textwidth}

\begin{tikzpicture}[scale=1.5, thick]
  % Define the vertices
  \node[draw, circle, fill=black, inner sep=2pt, label=left:$a$] (vertex1) at (0, 0) {};
  \node[draw, circle, fill=black, inner sep=2pt, label=right:$b$] (vertex2) at (2, 0) {};
  \node[draw, circle, fill=black, inner sep=2pt, label=above:$c$] (vertex3) at (1, 1.73) {}; % 1.73 is approximately sqrt(3) to make an equilateral triangle

  % Draw the edges
  \draw (vertex1) -- (vertex2) node[midway, below, yshift=-5pt, fill=white] {$e_1$};
  \draw (vertex2) -- (vertex3) node[midway, right, xshift=5pt, fill=white] {$e_2$};
  \draw (vertex3) -- (vertex1) node[midway, left, xshift= -5pt, fill=white] {$e_3$};
\end{tikzpicture}

(a) Simplicial complex  $K$.
\end{minipage}%
\begin{minipage}{.5\textwidth}

\centering
\begin{tikzpicture}[scale=2, thick]
  \node[draw, circle, fill=black, inner sep=2pt, label=above left:{$(a,e_1)$}] (ae1) at (0.1,1) {};
  \node[draw, circle, fill=black, inner sep=2pt, label=above right:{$(c,e_2)$}] (ce2) at (2,1) {};
  \node[draw, circle, fill=black, inner sep=2pt, label=below right:{$(a,e_3)$}] (ae3) at (1.9,0) {};
  \node[draw, circle, fill=black, inner sep=2pt, label=below left:{$(b,e_2)$}] (be2) at (0,0) {};
  \node[draw, circle, fill=black, inner sep=2pt, label=below:{$(c,e_3)$}] (ce3) at (0.7,0.6) {};
  \node[draw, circle, fill=black, inner sep=2pt, label=above:{$(b,e_1)$}] (be1) at (2.7,0.6) {};
  
  \draw (ae1) -- (be2);
  \draw (ae1) -- (ce3);
  \draw (be2) -- (ce3);
  
   \draw (ae3) -- (be1);
  \draw (ae3) -- (ce2);
  \draw (be1) -- (ce2);
  
     \draw (ae3) -- (be2);
  \draw (ae1) -- (ce2);
  \draw (be1) -- (ce3);
\end{tikzpicture}

(b)  $\MMK$
\end{minipage}

\caption{An example of discrete Morse function complex.}

\end{figure} 

\end{example}

With regard to the empty simplex $\es$,
we state the following  lemma.

\begin{lemma}
\label{0-dim}
Let $v$ be any vertex of $\M(K)$,
and $V$ be the  corresponding   gradient vector field.

Then, 
\begin{enumerate}
\item the connectedness homomorphism $h_0: C_{\ast} ^{V} \to C_{\ast} ^{V_0}$
is a birth transition;
\item  the connectedness homomorphism $g_0: C_{\ast} ^{V_0} \to C_{\ast} ^{V}$
is a death transition,
\end{enumerate}
where $V_0$ 
is the empty gradient vector field.

\end{lemma}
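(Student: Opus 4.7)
The plan is to identify the players and then apply Definition~\ref{birth-death transition} directly, following the template of Lemma~\ref{482}.

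Since $v$ is a vertex of $\MMK$, the corresponding gradient vector field is a single primitive vector field, say $V=\{(\alpha,\sigma)\}$ with $\alpha\prec\sigma$ and $\dim\sigma-\dim\alpha=1$; set $i=\dim\sigma$. Since $V_0=\emptyset$, every simplex of $K$ is $V_0$-critical, whereas $\alpha$ and $\sigma$ are the only simplices of $K$ that fail to be $V$-critical. Consequently
$$Cr_q(V)=Cr_q(V_0) \text{ for all } q\neq i-1,i,$$
while $Cr_{i-1}(V_0)=Cr_{i-1}(V)\cup\{\alpha\}$ and $Cr_i(V_0)=Cr_i(V)\cup\{\sigma\}$. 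This exactly matches the numerical hypothesis of Definition~\ref{birth-death transition} under the dictionary $f_1\leftrightarrow V$, $f_2\leftrightarrow V_0$, $\tilde\alpha\leftrightarrow\alpha$, and $\tilde\sigma\leftrightarrow\sigma$.

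Next I would verify that the associated coefficient is nonzero. Because $\alpha\prec\sigma$ is a direct face relation and $V_0$ contains no pairs, the unique $V_0$-gradient path from $\sigma$ to $\alpha$ is the length-zero path using this face relation, so
$$k:=n^{V_0}(\tilde\sigma,\tilde\alpha)=[\sigma:\alpha]=\pm 1\neq 0.$$
With all defining conditions in place, Definition~\ref{birth-death transition} directly labels $h_0:C_\ast^V\to C_\ast^{V_0}$ as a birth transition and $g_0:C_\ast^{V_0}\to C_\ast^V$ as a death transition, which is what is claimed.

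I do not anticipate any substantive obstacle here; the content is essentially a bookkeeping exercise running parallel to Lemma~\ref{482}, with $\es$ playing the role of $\overline{\sigma}_1$ and $v$ that of $\overline{\sigma}_2$. The only conceptual point worth making explicit is that treating $\es$ as a $(-1)$-dimensional face of every vertex of $\MMK$ is precisely what lets the adjacency $\es\prec v$ encode the single-pair difference between the two acyclic matchings $V_0$ and $V$, thereby placing the situation in the exact combinatorial regime in which birth-death transitions are defined.
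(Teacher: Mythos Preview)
Your proposal is correct and is precisely the spelled-out version of the paper's own proof, which reads in full: ``The proof is straightforward from definitions.'' You have unpacked this by identifying the dictionary $f_1\leftrightarrow V$, $f_2\leftrightarrow V_0$, checking the critical-set count in Definition~\ref{birth-death transition}, and verifying $k=[\sigma:\alpha]=\pm1$; this is exactly the intended content, carried out in parallel with Lemma~\ref{482} as you note.
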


\begin{proof}
The proof is straightforward from definitions.
\end{proof}

We illustrate this in Figure 3 and 4,
where the birth-death transitions
are drawn in dashed lines.

\begin{example}
Let $K$ be an edge.
$\MMK$ consists of two points, thus is not connected.
Hence,
there is no path of simplices of $\MMK$
connecting those two vertices.

By inserting $\es$
to $\MMK$, as we do in Fig.4,
the path
$(a,e_1) \leftrightarrow \es \leftrightarrow (b,e_1)$
connects two vertices.
And the path represents 
a composition of a birth transition 
and a death transition.

\begin{figure}[h]

\begin{minipage}{.5\textwidth}

\begin{tikzpicture}[scale=1.5, thick]
  % Define the vertices
  \node[draw, circle, fill=black, inner sep=2pt, label=left:$a$] (vertex1) at (0, 0) {};
  \node[draw, circle, fill=black, inner sep=2pt, label=right:$b$] (vertex2) at (2, 0) {};

  % Draw the edges
  \draw (vertex1) -- (vertex2) node[midway, below, yshift=-5pt, fill=white] {$e_1$};
 
\end{tikzpicture}

(a) Simplicial complex  $K$.
\end{minipage}%
\begin{minipage}{.5\textwidth}

\centering
\begin{tikzpicture}[scale=1.5, thick]
  \node[draw, circle, fill=black, inner sep=2pt, label=left:{$(a,e_1)$}]   (vertex1) at (0, 0) {};
  \node[draw, circle, fill=black, inner sep=2pt, label=right:{$(b,e_1)$}] (vertex2) at (2, 0) {};
  %\node[draw, circle, fill=black, inner sep=2pt, label=above:$\tilde{\emptyset}$] (vertex3) at (1, 1.73) {}; % 1.73 is approximately sqrt(3)

    % Draw the edges
 % \draw (vertex1) -- (vertex2) node[midway, below, yshift=-5pt, fill=white] {$e_1$};
 % \draw (vertex2) -- (vertex3) node[midway, right, xshift=5pt, fill=white] {};
 % \draw (vertex3) -- (vertex1) node[midway, left, xshift= -5pt, fill=white] {};
\end{tikzpicture}

(b)   $\MMK$
\end{minipage}

\caption{An example of discrete Morse function complex.}
\label{complex example}
\end{figure}

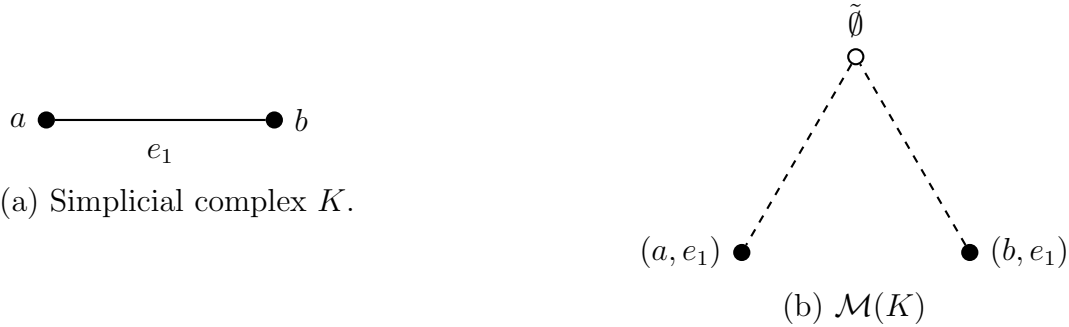
\begin{figure}[h!]

\begin{minipage}{.5\textwidth}

\begin{tikzpicture}[scale=1.5, thick]
  % Define the vertices
  \node[draw, circle, fill=black, inner sep=2pt, label=left:$a$] (vertex1) at (0, 0) {};
  \node[draw, circle, fill=black, inner sep=2pt, label=right:$b$] (vertex2) at (2, 0) {};

  % Draw the edges
  \draw (vertex1) -- (vertex2) node[midway, below, yshift=-5pt, fill=white] {$e_1$};
 
\end{tikzpicture}

(a) Simplicial complex  $K$.
\end{minipage}%
\begin{minipage}{.5\textwidth}

\centering
\begin{tikzpicture}[scale=1.5, thick]
  \node[draw, circle, fill=black, inner sep=2pt, label=left:{$(a, e_1)$}] (vertex1) at (0, 0) {};
  \node[draw, circle, fill=black, inner sep=2pt, label=right:{$(b,e_1)$}] (vertex2) at (2, 0) {};
  \node[draw, circle, inner sep=2pt, label=above:$\tilde{\emptyset}$] (vertex3) at (1, 1.73) {}; % 1.73 is approximately sqrt(3)

    % Draw the edges
 % \draw (vertex1) -- (vertex2) node[midway, below, yshift=-5pt, fill=white] {$e_1$};
  \draw[dashed] (vertex2) -- (vertex3) node[midway, right, xshift=5pt, fill=white] {};
  \draw[dashed] (vertex3) -- (vertex1) node[midway, left, xshift= -5pt, fill=white] {};
\end{tikzpicture}

(b)  $\M(K)$
\end{minipage}

\caption{The empty simplex.}
\label{complex example}
\end{figure}

\end{example}

Combining 
 the above lemmas,
we recover the idea of Cerf theory on $K$
 by  connecting the discrete Morse functions with birth-death transitions.
\begin{theorem}
\label{main_theorem}
Let $K$ be a finite simplicial complex
and 
$f_1, f_2: K \to \mathbb{R}$ be
discrete Morse functions.

Then, $f_1$ and $f_2$ can be transformed into 
one another
	with a finite sequence of 
birth-death transitions.
\end{theorem}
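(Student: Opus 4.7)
The plan is to connect $\overline{V}_1$ and $\overline{V}_2$ through the empty simplex $\es$, which serves as a universal hub in $\M(K)$. Let $V_1, V_2$ denote the gradient vector fields of $f_1, f_2$, and $\overline{V}_1, \overline{V}_2 \in \M(K)$ the simplices they determine. Since $K$ is finite, $\M(K)$ is a finite simplicial complex and each $\overline{V}_i$ has finite dimension $d_i$. For each $i$, I would fix a maximal chain of faces
\[
\es \prec \overline{\sigma}^{(i)}_0 \prec \overline{\sigma}^{(i)}_1 \prec \cdots \prec \overline{\sigma}^{(i)}_{d_i} = \overline{V}_i,
\]
in which consecutive simplices differ by exactly one primitive vector field. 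Such a chain always exists because any subset of an acyclic matching is again an acyclic matching, so every subset of $V_i$ is a legitimate gradient vector field and determines a face of $\overline{V}_i$ in $\MMK$.

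Next, I would apply Lemma \ref{482} to each codimension-$1$ step inside $\MMK$ and Lemma \ref{0-dim} to the bottom step $\es \prec \overline{\sigma}^{(i)}_0$. Together these identify the connectedness homomorphism from $C^{V'}_\ast$ to $C^{V}_\ast$ as a birth transition when $V' \supsetneq V$, and as a death transition when $V' \subsetneq V$. Walking down the chain from $\overline{V}_1$ to $\es$ therefore gives a sequence of $d_1 + 1$ birth transitions, each creating the pair of critical simplices obtained by dropping one primitive pair from the matching. Walking up from $\es$ to $\overline{V}_2$ gives a sequence of $d_2 + 1$ death transitions, and concatenating the two sequences yields the required finite chain of birth-death transitions from $C^{f_1}_\ast$ to $C^{f_2}_\ast$.

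The main conceptual point is why $\M(K)$ must be used in place of $\MMK$. Example 4.5 shows that $\MMK$ can be disconnected, so no purely $\MMK$-internal face walk would connect an arbitrary pair $\overline{V}_1, \overline{V}_2$. Adjoining $\es$ repairs this: since $\es$ is a face of every simplex of $\M(K)$, it serves as the common hub used above. Once this observation is in place, Lemmas \ref{482} and \ref{0-dim} do the algebraic work, and the theorem follows by bookkeeping along the two chains.
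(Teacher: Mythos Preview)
Your proposal is correct and follows essentially the same approach as the paper: both arguments descend from $\overline{V}_1$ to $\es$ via a maximal face chain (birth transitions by Lemma~\ref{482} and Lemma~\ref{0-dim}) and then ascend from $\es$ to $\overline{V}_2$ (death transitions), concatenating the two. Your write-up is in fact slightly more explicit than the paper's, since you justify why the maximal chain exists (subsets of acyclic matchings are acyclic) and why augmenting by $\es$ is necessary.
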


\begin{proof}
Let $V_1$ and $V_2$ be 
the gradient vector fields 
corresponding to $f_1$ and $f_2$. 
Suppose that these correspond to simplices 
$\overline{\sigma}_1, \overline{\sigma}_2 \in \mathcal{M}(K)$.
 
Then, we can construct a  path consisting of successive adjacent simplices
$$ \overline{\sigma}_1^{(n)} \succ  \alpha_1^{(n-1)} \succ \alpha_2^{(n-2)}\succ \cdots \succ \alpha_{n}^{0} \succ \es$$ 
 connecting  
$ \overline{\sigma}_1$ to $\es$,
denoted $\overline{\sigma}_1 \;\to\; \es$.
Each adjacent pair  
corresponds to   a birth transition
as established in Lemma \ref{482}(2), when $\dim \alpha_i \geq 1$;
and  established in Lemma \ref{0-dim}(1), when $\dim \alpha_i = 0$.

Similarly, 
we can also construct a  path 
connecting  
$\es$ to $ \overline{\sigma}_2$,
 each step corresponding to a death transition.

Hence, both $\overline{\sigma}_1$ and $\overline{\sigma}_2$ 
can be connected to $\es$ via such  paths:
\[
\overline{\sigma}_1 \;\to\; \es 
\quad \text{and} \quad 
\es \;\to\; \overline{\sigma}_2 .
\]
Concatenating these two paths gives a  sequence
\[
\overline{\sigma}_1 \;\to\; \es \;\to\; \overline{\sigma}_2 ,
\]
lying entirely in $\M(K)$, in which each adjacent pair of simplices 
corresponds to a birth or death transition. 
Since $K$ is finite, this sequence has finitely many steps, 
completing the proof.

\end{proof}

\section{An alternative proof of Forman's Theorems}
In Forman's original theory,  a simplicial complex  $K$ is simplified through the collapse of gradient 
pairs (pairs of non-critical simplices)
guided by the descending values of the discrete Morse function $f$,
or  equivalently,  the arrows  in the associated gradient vector field $V$.
This collapsing sequence
reduces $K$ to a smaller simplicial complex 
with critical simplices encoding its homology.

Dually,
we can equivalently generate/cancel the critical simplices, 
the generators of the discrete Morse complex $C_{\ast}^f$,
in pairs similar to cancellation in classical Morse theory.
In this process, $K$ remains unchanged,
and the homological structure is modified through algebraic transitions.

Theorem \ref{main_theorem} establishes 
that any two gradient vector fields in
$\M(K) $ are path-connected via birth-death transitions.
Besides the algebraic framework given in algebraic Morse theory, 
we aim to provide more geometric information and 
intuitions by the introduction of connectedness homomorphisms
and birth-death transitions.

One of the 
fundamental results in discrete Morse theory, 
established by Forman \cite{Forman_guide}, is the equivalence
 between discrete Morse homology and simplicial homology. 
 In our framework, where birth-death transitions act as chain maps between discrete Morse complexes, 
 this correspondence can be recovered over coefficient 
$ \mathbb{Q}$
 naturally.

\begin{theorem} \cite[Theorem 7.1]{Forman_guide}
\label{thm:homology-isomorphism}
Let $K$ be a simplicial complex  and 
$f: K \to \R$
be a discrete Morse function.

Then,  over $\mathbb{Q}$,
$$H_{\ast}(K) \cong H_{\ast}^{\text{simp}}(K),$$
where 
$H_{\ast}^{\text{simp}}(K)$
denotes the simplicial homology of $K$.
\end{theorem}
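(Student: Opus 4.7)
The plan is to reduce the statement to the special case of the \emph{trivial} discrete Morse function $f_0 \colon K \to \mathbb{R}$ defined by $f_0(\sigma) = \dim \sigma$, and then to propagate the resulting isomorphism across $\mathcal{M}(K)$ by means of birth-death transitions.

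First I would verify that $f_0$ computes simplicial homology on the nose. With respect to $f_0$ every simplex of $K$ is critical, so the chain group $C_q^{f_0}$ is canonically identified with the simplicial chain group $C_q^{\mathrm{simp}}(K)$ as a free abelian group. Since the gradient vector field $V_0$ is empty, the only $V_0$-paths from a critical $\sigma^{(q)}$ to a critical $\alpha^{(q-1)}$ are the trivial one-step paths, which exist if and only if $\alpha \prec \sigma$ and carry weight $[\sigma : \alpha]$. Hence $\partial_q^{f_0}(\sigma) = \sum_{\alpha \prec \sigma} [\sigma : \alpha]\,\alpha$ coincides with the simplicial boundary, so $H_\ast^{f_0}(K) \cong H_\ast^{\mathrm{simp}}(K)$.

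Next, given an arbitrary discrete Morse function $f$ on $K$, Theorem \ref{main_theorem} supplies a finite sequence
\[
f = g_0, g_1, \ldots, g_N = f_0
\]
of discrete Morse functions with consecutive pairs related by a birth or a death transition $h_i \colon C_\ast^{g_{i-1}} \to C_\ast^{g_i}$. By the chain-homotopy equivalence theorem established in Section 3, each such $h_i$ is a chain-homotopy equivalence over $\mathbb{Q}$, and therefore induces an isomorphism $(h_i)_\ast \colon H_\ast^{g_{i-1}}(K;\mathbb{Q}) \xrightarrow{\cong} H_\ast^{g_i}(K;\mathbb{Q})$ on rational homology. Composing these $N$ isomorphisms yields $H_\ast^{f}(K;\mathbb{Q}) \cong H_\ast^{f_0}(K;\mathbb{Q}) = H_\ast^{\mathrm{simp}}(K;\mathbb{Q})$, which is exactly the claim.

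The only point requiring real care is the coefficient issue: the homotopy $s$ inverting a birth-death transition uses the factor $1/k$ with $k = n^{f_2}(\tilde{\sigma},\tilde{\alpha})$, so the argument produces an isomorphism only after passing to $\mathbb{Q}$-coefficients, which is precisely the hypothesis of the theorem. The identification of $C_\ast^{f_0}$ with the simplicial chain complex in the first step is essentially a definition chase, and Theorem \ref{main_theorem} supplies the required path in $\mathcal{M}(K)$ directly, so no substantive obstacle remains beyond checking that the successive chain-homotopy equivalences compose cleanly into a single homology isomorphism.
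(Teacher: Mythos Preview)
Your proposal is correct and follows essentially the same route as the paper: identify the discrete Morse complex for the empty gradient vector field $V_0$ with the simplicial chain complex, then invoke Theorem~\ref{main_theorem} to connect $f$ to $f_0$ by birth-death transitions and compose the resulting chain-homotopy equivalences over $\mathbb{Q}$. If anything, your write-up is slightly more explicit than the paper's in checking that $\partial^{f_0}$ agrees with the simplicial boundary (the paper only remarks that the generators coincide), and your closing comment on the $1/k$ coefficient issue matches the paper's Remark~5.3.
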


\begin{proof}
Let $V_0$ be the empty gradient vector field 
(i.e., the one where all simplices are critical).
Define an 
isomorphism 
$$\psi_q: C_q ^{\text{simp}}  \to C_{q} ^{V_0},$$
for all $q$,
where $C_q ^{\text{simp}} $ denotes the
$q$-th simplicial chain group of $K$.
Note that $$\psi_q= \id_q,$$
for all $q$,
as the generators of $C_q ^{\text{simp}} $ and 
$C_{q} ^{V_0} $ coincide.

By Theorem~\ref{main_theorem}, the gradient vector field $V$ corresponding to  $f$
 is connected to $V_0$ 
 through a sequence of birth-death transitions, 
 each of which defines a chain homotopy equivalence over $\mathbb{Q}$. 
 Thus, composing the induced maps yields a homology isomorphism:
 \[H_{\ast}^V(K) \cong H_{\ast}^{V_0}(K) \cong H_{\ast}^{\text{simp}}(K),\]
 completing the proof.
\end{proof}

Besides Theorem 5.1,
other Forman's theorems can also be recovered in the sense
of ``collapsing critical simplices" while keeping $K$ unchanged, similar to the proof above.
We provide one more example, the Morse equality, as follows.

\begin{theorem}\cite[Theorem 2.11]{Forman_guide}
\label{thm:morse_equality}
Let $K$ be a finite simplicial complex and 
$f:K \to \R$ be a discrete Morse function.

Then, over $\mathbb{Q}$,
$$\sum_q (-1)^q \beta_q(K)=\sum_q (-1)^q Cr_q(f),$$
where 
$\beta_q(K)$ denotes the $q$-th Betti number 
of $K$.
\end{theorem}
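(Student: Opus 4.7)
The plan is to deduce the Morse equality as an immediate corollary of Theorem \ref{thm:homology-isomorphism} via the classical Euler characteristic identity. Since $C_q^f$ is a free abelian group of rank $\#Cr_q(f)$, the quantity $\sum_q (-1)^q \#Cr_q(f)$ is precisely the Euler characteristic of the discrete Morse chain complex $(C_*^f, \partial_*^f)$. I will invoke the standard identity, valid for any bounded chain complex of finite-dimensional $\mathbb{Q}$-vector spaces,
\[
\sum_q (-1)^q \dim_{\mathbb{Q}} C_q \;=\; \sum_q (-1)^q \dim_{\mathbb{Q}} H_q(C_*),
\]
and apply it to $C_*^f \otimes \mathbb{Q}$ to obtain $\sum_q (-1)^q \#Cr_q(f) = \sum_q (-1)^q \dim_{\mathbb{Q}} H_q^f$. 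Then Theorem \ref{thm:homology-isomorphism} identifies $\dim_{\mathbb{Q}} H_q^f$ with $\beta_q(K)$, completing the proof.

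Alternatively, and more in the spirit of this paper, one can argue directly from Theorem \ref{main_theorem} without invoking Theorem \ref{thm:homology-isomorphism}. The key observation is that a single birth-death transition changes both $\#Cr_i$ and $\#Cr_{i-1}$ by exactly one, so the contributions $(-1)^i$ and $(-1)^{i-1}$ cancel and the alternating sum $\sum_q (-1)^q \#Cr_q(f)$ is invariant along any sequence of birth-death transitions. By Theorem \ref{main_theorem}, $f$ can be connected to the discrete Morse function with gradient vector field $V_0$ in which every simplex is critical; for that function the alternating sum equals $\sum_q (-1)^q \#K_q$, which is the classical simplicial Euler characteristic of $K$ and therefore equals $\sum_q (-1)^q \beta_q(K)$.

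No substantial obstacle is expected here: both routes reduce the claim to previously established results, and the Morse equality emerges as a direct corollary of the birth-death framework developed in this paper.
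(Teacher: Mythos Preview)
Your second route is exactly the paper's proof: connect $f$ to the trivial function $f_0$ via Theorem~\ref{main_theorem}, observe that each birth or death transition alters $\#Cr_{i-1}$ and $\#Cr_i$ by one so that the alternating sum is preserved, and then invoke $\sum_q(-1)^q\#K_q=\chi(K)=\sum_q(-1)^q\beta_q(K)$. Your first route, deducing the identity from Theorem~\ref{thm:homology-isomorphism} via the Euler characteristic of a bounded complex of finite-dimensional $\mathbb{Q}$-vector spaces, is also correct but less in the spirit of the paper, since it hides the elementary combinatorial invariance under birth-death moves that the direct argument makes explicit.
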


\begin{proof}
Let $f_0: K\to \R$
be a discrete Morse function whose every simplex is critical.
 Theorem \ref{main_theorem} implies that 
there exists a finite sequence of birth-death transitions
transforming $f_0$ to $f$.
By definition,
a birth/death transition generates/cancels 
a pair of $(q,q+1)$-dimension critical simplices. 
Thus,
the alternating sum remains invariant under such transitions.
That is,
$$\sum_q (-1)^q Cr_q(f)= \sum_q (-1)^q Cr_q(f_0).$$

Moreover,
since 
$$ \sum_q (-1)^q Cr_q(f_0)=\chi(K)=\sum_q (-1)^q \beta_q,$$
where $\chi(K)$ denotes the Euler characteristic of $K$,
we have the desired equality.

\end{proof}
\begin{remark}
We make the following comments regarding coefficients:

Forman’s Theorems~\ref{thm:homology-isomorphism} and~\ref{thm:morse_equality} 
are known to hold integrally, i.e.\ over $\mathbb{Z}$. 
Theorem~3.6 establishes that a birth–death transition induces a chain homotopy equivalence over $\mathbb{Q}$. 
The proof works integrally whenever the connectedness coefficient
 satisfies $|k|=1$, so in this case 
the statement holds over $\mathbb{Z}$, as well.

In smooth Cerf theory, bifurcations with $|k|\geq 2$ can be decomposed into successive bifurcations of type $|k|=1$. 
We believe an analogous statement  hold in the discrete setting as well. 
However, it may require further subdivisions of the simplicial complex $K$. 
A detailed investigation of this question will be carried out in future work.

\end{remark}

\subsection*{Several remarks on topology of the set of discrete Morse functions.}

Recall that in \cite{no.2},
we introduced the following equivalence relation 
on the set of all discrete Morse functions on a simplicial complex
$K$,
denoted, $\mathcal{F}$,
to study its topological properties.

\begin{definition}
Let $f_1, f_2, \ldots,  f_n : K \to \mathbb{R}$
be a sequence of 
discrete Morse functions,
with  corresponding 
discrete Morse complexes $C_{\ast}^{f_1},C_{\ast}^{f_2} ,\ldots, C_{\ast}^{f_n}$.
If, for each $i= 1, 2, \cdots, n-1$, 
the connectedness homomorphism
$h^i_{\ast}: C_{\ast}^{f_i} \to C_{\ast}^{f_{i+1}}$
is either an isomorphism or a  
birth/death
 transition,
then we call the 
sequence 
of discrete Morse functions 
a \textit{birth-death transition sequence}.
Any two 
discrete Morse functions $f_j$ and $f_k$
in such a sequence
are called 
\textit{transitively connected},
denoted $f_j \Tsim  f_k$.
\end{definition}

This defines a natural equivalence relation:
\begin{proposition}
Transitive connectedness $\Tsim$ is an equivalence relation on $\mathcal{F}$.
\end{proposition}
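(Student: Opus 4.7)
The plan is to verify in turn reflexivity, symmetry, and transitivity of $\Tsim$ on $\mathcal{F}$. All three will follow directly from the definition of a birth-death transition sequence, together with the duality between birth and death transitions built into Definition \ref{birth-death transition}.

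For reflexivity, I will take the trivial one-term sequence $(f)$, or equivalently $(f,f)$ equipped with the identity connectedness homomorphism $\id: C_\ast^f \to C_\ast^f$. Since this is clearly an isomorphism, it qualifies as a birth-death transition sequence, and so $f \Tsim f$ for every $f \in \mathcal{F}$.

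For symmetry, suppose $f_j \Tsim f_k$ is witnessed by a sequence $(g_1,g_2,\ldots,g_m)$ with $g_1 = f_j$ and $g_m = f_k$. I will consider the reversed sequence $(g_m, g_{m-1}, \ldots, g_1)$ and examine each consecutive step. If the original connectedness homomorphism $h^i: C_\ast^{g_i}\to C_\ast^{g_{i+1}}$ is the identity, then by Lemma \ref{f_1=f_2} the gradient vector fields coincide, so the reverse connectedness homomorphism is also the identity. If $h^i$ is a birth transition, then the opposite-direction connectedness homomorphism $g^i: C_\ast^{g_{i+1}}\to C_\ast^{g_i}$ is, by Definition \ref{birth-death transition}, a death transition; the dual statement handles death-to-birth. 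Hence the reversed sequence is again a birth-death transition sequence, giving $f_k \Tsim f_j$.

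For transitivity, given $f_j \Tsim f_k$ via $(g_1,\ldots,g_m)$ and $f_k \Tsim f_l$ via $(h_1,\ldots,h_n)$ with $g_m = h_1 = f_k$, I will simply concatenate to form $(g_1,\ldots,g_{m-1}, f_k, h_2,\ldots,h_n)$. Each consecutive pair in this new sequence coincides with a consecutive pair from one of the original sequences, so every associated connectedness homomorphism is an isomorphism or a birth/death transition, yielding $f_j \Tsim f_l$. The only genuine subtlety, rather than a real obstacle, is the symmetry step: one must use the birth/death duality in Definition \ref{birth-death transition} to flip each edge of the sequence. Once this duality is made explicit, all three axioms follow routinely.
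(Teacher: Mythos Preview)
Your proof is correct and follows the same approach as the paper's: reflexivity via the identity map, symmetry by reversing the sequence and invoking the birth/death duality of Definition~\ref{birth-death transition}, and transitivity by concatenation. The paper's proof is simply a terse three-sentence version of what you have written out in detail.
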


\begin{proof}
Reflexivity holds since $C_{\ast}^{f} \cong C_{\ast}^{f}$ via the identity map.
 Symmetry follows from the reversibility of birth-death transitions and the invertibility of isomorphisms.
Transitivity is built into the definition of the sequence.

\end{proof}

This equivalence relation allows us to
 characterize the topology of 
$\mathcal{F}$ via the quotient
$\mathcal{F}/ \Tsim$.

\begin{theorem}

Let $K$ be a simplicial complex
and $\mathcal{F}$ be the set of all discrete Morse functions on $K$.

Then,
$$\mathcal{F}/ \Tsim= \{\ast \}.$$

\end{theorem}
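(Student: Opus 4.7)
The plan is to reduce the statement directly to Theorem \ref{main_theorem}. To show $\mathcal{F}/\Tsim = \{\ast\}$, it suffices to establish that any two discrete Morse functions $f_1, f_2 \in \mathcal{F}$ satisfy $f_1 \Tsim f_2$, so that the relation has a unique equivalence class.

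First, I would invoke Theorem \ref{main_theorem} to obtain a finite sequence of birth-death transitions carrying $f_1$ to $f_2$. Recall that the proof of that theorem constructs a path in $\M(K)$ that descends from the vertex corresponding to $V_1$ to the empty simplex $\es$ and then ascends to the vertex corresponding to $V_2$; each step of this path alters the gradient vector field by the addition or deletion of a single primitive vector field.

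Next, I would lift this combinatorial path to an actual sequence of discrete Morse functions. For each intermediate gradient vector field appearing along the path, choose a representative discrete Morse function realizing it; this yields a chain $f_1 = g_0, g_1, \ldots, g_m = f_2$ in $\mathcal{F}$. For each consecutive pair $(g_i, g_{i+1})$, either the underlying gradient vector fields coincide, in which case Lemma \ref{f_1=f_2} identifies the connectedness homomorphism with the identity isomorphism, or they differ by exactly one primitive vector field, in which case Lemma \ref{482} (or Lemma \ref{0-dim}, if the empty vector field is involved) identifies the connectedness homomorphism as a birth or death transition. Hence the sequence $g_0, g_1, \ldots, g_m$ meets the definition of a birth-death transition sequence, so $f_1 \Tsim f_2$.

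Since $f_1, f_2$ were arbitrary, $\Tsim$ has a single equivalence class, giving $\mathcal{F}/\Tsim = \{\ast\}$. There is no genuine obstacle here beyond the bookkeeping step of lifting a path in $\M(K)$ to an explicit chain of discrete Morse functions; the substance of the argument is entirely contained in Theorem \ref{main_theorem}, and this theorem is best viewed as a reformulation of that result in terms of the equivalence relation $\Tsim$.
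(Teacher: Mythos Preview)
Your proposal is correct and follows essentially the same route as the paper: both reduce directly to Theorem~\ref{main_theorem} and invoke Lemma~\ref{f_1=f_2} for the case of coinciding gradient vector fields. Your explicit mention of lifting the path in $\mathcal{M}(K)$ to a chain of actual discrete Morse functions is a useful bit of bookkeeping that the paper leaves implicit, but otherwise the arguments are the same.
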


\begin{proof}
By Theorem \ref{main_theorem}, any two discrete Morse functions
$f_1,f_2\in \mathcal{F}$
can be connected by a finite sequence of birth-death transitions.
Each transition
$h_{\ast}^{i}:C_{\ast}^{f_i} \to C_{\ast}^{f_{i+1}}$
is chain map,
and isomorphisms arise when gradient vector fields are identical (Lemma \ref{lem:equivalence}).
Thus, 
$f_1\Tsim f_2$
for all 
$f_1,f_2\in \mathcal{F}$,
implying the quotient consists of a single point.
That is,
$$\mathcal{F}/ \Tsim= \{\ast \}.$$

\end{proof}

\begin{corollary}
Let $K$ be a simplicial complex.
Then,
the augmented complex of discrete Morse functions $\M(K)$ 
is connected with poset topology.

\end{corollary}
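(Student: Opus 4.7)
The plan is to derive this corollary as an almost immediate consequence of Theorem~\ref{main_theorem}, by translating the sequence of birth--death transitions connecting any two gradient vector fields into a chain of face-containments in $\M(K)$. First I would recall that the simplices of $\M(K)$ are in natural bijection with gradient vector fields on $K$: a $q$-simplex $\overline{V}$ corresponds to an acyclic matching $V$ of cardinality $q+1$, and the augmented empty simplex $\es$ corresponds to the empty gradient vector field $V_0$ in which every simplex of $K$ is critical. To establish connectedness in the poset topology it suffices to show that the Hasse diagram of the face poset is connected, so I would fix two arbitrary simplices $\overline{\sigma}_1, \overline{\sigma}_2 \in \M(K)$ and aim to construct a zig-zag of strict face-containments between them.

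Next I would apply Theorem~\ref{main_theorem} to two discrete Morse functions $f_1, f_2$ whose gradient vector fields correspond to $\overline{\sigma}_1, \overline{\sigma}_2$, obtaining a finite sequence of birth--death transitions linking them via $\es$. By Lemma~\ref{482} and Lemma~\ref{0-dim}, each such transition between gradient vector fields $V, V'$ is precisely the data of a codimension-one face-containment $\overline{V} \prec \overline{V'}$ (or its reverse) in $\M(K)$: a birth transition enlarges the acyclic matching by a single primitive vector field, which on the simplicial side is exactly the passage to a coface of one higher dimension. Concatenating these transitions thus yields a chain
\[
\overline{\sigma}_1 \succ \alpha_1 \succ \cdots \succ \es \prec \beta_1 \prec \cdots \prec \overline{\sigma}_2
\]
in $\M(K)$ in which consecutive entries are related by strict inclusion, and any two such consecutive simplices lie in a common closed simplex and hence in the same path component of the underlying space.

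The one point requiring real care, rather than a genuine obstacle, is the role of the empty simplex. As the example of a single edge illustrates, $\widetilde{\M}(K)$ can itself be disconnected, and without the augmentation by $\es$ the two descending chains from $\overline{\sigma}_1$ and $\overline{\sigma}_2$ produced in the proof of Theorem~\ref{main_theorem} would have no common terminus. The inclusion of $\es$ as a universal lower bound of the face poset is therefore exactly what makes the zig-zag close, and concatenating along the chain above yields the desired path from $\overline{\sigma}_1$ to $\overline{\sigma}_2$, completing the argument.
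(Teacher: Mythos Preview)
Your proposal is correct and matches the paper's implicit derivation: the corollary is stated without proof immediately after Theorem~5.6, and your route through Theorem~\ref{main_theorem} together with the face-containment interpretation of birth--death transitions (Lemmas~\ref{482} and~\ref{0-dim}) is precisely the argument the paper's structure suggests. One might observe that the result is in fact even more immediate once $\es$ is adjoined as the minimum of the face poset---any poset with a least element has connected Hasse diagram---but your framing through the zig-zag via $\es$ aligns with the paper's emphasis.
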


%%%%%%%%%%%%%%%%%%%%%%%%%%%%%%%%%%%%%%%%v

\begin{remark}
Note that higher-dimensional homology of $\M(K)$ and $\MMK$
are isomorphic,
as $\es$ affects only connectivity at dimension $0$.
This viewpoint aligns with the results of Scoville and Zaremsky \cite[Theorem 5.4]{higher connectivity of the morse complex}.
They
proved that almost all $\MMK$ (except Example 4.5) are connected,
which implies Corollary 5.6.

\end{remark}

\newpage

\end{document}